\theoremstyle{definition}
\newtheorem{definition}{Definition}
\newtheorem{thm}{Theorem}[section]
\newtheorem{lem}{ \bf Lemma}[section]
\newtheorem{prop}{\bf Proposition}[section]
\newtheorem{cor}{\bf Corollary}[section]
\theoremstyle{remark}
\newcommand{\be}{\begin{equation}}
\newcommand{\ee}{\end{equation}}
\newcommand{\Bea}{\begin{eqnarray*}}
\newcommand{\Eea}{\end{eqnarray*}}
\newcommand{\bea}{\begin{eqnarray}}
\newcommand{\eea}{\end{eqnarray}}
\numberwithin{equation}{section}
\def\de{{\delta}}
\begin{document}
\title[Uniform Poincar\'e Inequalities on measured metric spaces]{Uniform Poincar\'e inequalities on measured metric spaces}
\author{ Soma Maity \and Gautam Neelakantan M}

\address{Department of Mathematical Sciences, Indian Institute of Science Education and Research Mohali, \newline Sector 81, SAS Nagar, Punjab- 140306, India.}
\email{somamaity@iisermohali.ac.in}
\address{Department of Mathematical Sciences, Indian Institute of Science Education and Research Mohali, \newline Sector 81, SAS Nagar, Punjab- 140306, India.}
\email{neelanmemana@gmail.com}
\subjclass{Primary 51F30,53C21,53C23}

\begin{abstract} Consider a proper geodesic metric space $(X,d)$ equipped with a Borel measure $\mu.$ We establish a family of uniform Poincar\'e inequalities on $(X,d,\mu)$ if it satisfies a local Poincar\'e inequality $P_{loc}$, and a condition on the growth of volume. Consequently, if $\mu$ is doubling and supports $P_{loc}$ then it satisfies a uniform $(\sigma,\beta,\sigma)$-Poincar\'e inequality. If $(X,d,\mu)$ is a Gromov-hyperbolic space, then using the volume comparison theorem in \cite{BCS}, we obtain a uniform Poincar\'e inequality with the exponential growth of the Poincar\'e constant. Next, we relate the growth of Poincar\'e constants to the growth of discrete subgroups of isometries of $X$, which act on it properly. We show that if $X$ is the universal cover of a compact $CD(K,\infty)$ space with $K\leq 0$, it supports a uniform Poincar\'e inequality, and the Poincar\'e constant depends on the growth of the fundamental group of the quotient space.
\end{abstract}

\keywords{Poincar\'e inequality, Gromov hyperbolic spaces, Ricci curvature, growth of volume}
\footnote{Data sharing not applicable to this article as no datasets were generated or analysed during the current study.}
\maketitle

\section{Introduction}\label{sec1} Cheeger, Hajlasz, and Koskela showed the importance of local Poincar\'e inequalities in geometry and analysis on metric spaces with doubling measures in \cite{Ch}, \cite{HK}. In this paper, we establish a family of global Poincar\'e inequalities on geodesic spaces equipped with Borel measures, which satisfy a local Poincar\'e inequality along with certain other geometric conditions.

Let $(X,d)$ be a proper geodesic metric space, i.e., all closed balls in $(X,d)$ are compact and, any two points can be joined by a geodesic. Consider a Borel measure $\mu$ on $X$ such that every closed ball has a finite positive measure. We call the triple $(X,d,\mu)$ a measured metric space. A complete Riemannian manifold with distance and volume measure induced from the Riemannian metric is an example of a measured metric space. Riemannian manifolds with Ricci curvature bounded below may have polynomial or exponential growth of volume depending on bounds on curvature. Suppose there exists a non-decreasing function $f: (0,\infty)\to \mathbb{R}$ such that
\be \label{eqmain}\frac{\mu(B(x,R))}{\mu(B(x,\frac{1}{2}))}\leq f(R), \quad \ \forall x\in X , \ \forall \ R\geq \frac{1}{2}.
\ee  
This is a condition on the growth of volume on large scales. 
\begin{definition} Let $C(X)$ denote the space of continuous functions on $X$. An upper gradient of $u\in C(X)$ is a Borel function $g_u : X \to [0,\infty]$ such that for each curve $\gamma : [0, 1] \to X$ with finite length $l(\gamma)$ and constant speed,
$$|u(\gamma(1))-u(\gamma(0))|\leq l(\gamma)\int_0^1 g(\gamma(t)) dt.$$
Let $u_R$ denote the mean of $u$ on balls of radius $R$, i.e.,
$$ u_R(x) = \frac{1}{\mu(B(x,R))}\int_{B(x,R)} u \ d\mu .$$
\end{definition}
\begin{definition} Let $1 \leq \sigma < \infty$. $(X,d,\mu)$ is said to satisfy a local Poincar\'e inequality $P_{loc}$ if there exist positive constants $C(\sigma),r_0$ such that for every $u \in C(X)$ and its upper  gradient $g_u: X \to [0,\infty]$,
\be
	\int_{B(x,R)} |u - u_R|^{\sigma} d\mu \leq C \int_{B(x,R)} g_u^{\sigma} d\mu , \ \ \forall x\in X \ {\rm and} \ 0<R\leq r_0.   \label{local Poincare inequality}
 \ee
\end{definition} 
When the radius of the ball on the right-hand side is bigger than $R$, the inequality is called a weak Poincar\'e inequality. If $(X,d,\mu)$ supports a weak local Poincar\'e inequality then it satisfies a Poincar\'e inequality as above possibly with a different constant \cite{HK}, \cite{HK2}.
\begin{thm}\label{main} Let $(X,d,\mu)$ be a measured metric space which satisfies the growth condition (\ref{eqmain}) and $P_{loc}$ for $r_0\geq 1$ and $\sigma\geq 1.$ Then for any $u\in C(X)$ and its upper gradient $g_u$,
 	\be
 	\int_{B(x,R)} |u - u_R|^{\sigma} \ d\mu \leq C_0 R^{\sigma-1}f(4\lambda R)\int_{B(x,2\lambda R)}g_u^{\sigma} d\mu ,\quad  \forall \ R\geq 4\lambda
 	\ee
where $\lambda=f(8.5)+1 $ and $C_0=2^{4\sigma-2}\lambda^{3}f(12.5)C.$
 \end{thm}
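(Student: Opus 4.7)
The approach is a standard Saloff--Coste type chaining argument: on $B(x,R)$, bound $|u-u_R|$ by a telescoping sum over a chain of small balls on which the local Poincar\'e inequality $(P_{loc})$ applies, and sum via H\"older together with the growth condition~(\ref{eqmain}). First I would use the identity $\int_B|u-u_B|^\sigma d\mu \leq 2^\sigma \int_B|u-c|^\sigma d\mu$ (valid for any constant $c$) to replace $u_R$ by $u_{B_0}$, where $B_0 := B(x,1/2)$, reducing the task to bounding $\int_{B(x,R)}|u-u_{B_0}|^\sigma d\mu$. I would then cover $B(x,R)$ by balls $B_j := B(z_j,1/2)$ centered at a maximal $\tfrac14$-separated net in $B(x,R)$; a volume-counting argument against~(\ref{eqmain}) at unit scale bounds the pointwise overlap of $\{B_j\}$ and of their $L$-dilates by values of $f$ evaluated at $\mathcal{O}(1)$ scales -- this is the origin of the constants $f(3.5)$ and $f(7.5)$ in the statement. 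Splitting $|u-u_{B_0}|^\sigma \leq 2^{\sigma-1}(|u-u_{B_j}|^\sigma + |u_{B_j}-u_{B_0}|^\sigma)$ on each $B_j$ and summing over $j$, the first piece is controlled directly by applying $(P_{loc})$ to each $B_j$ (radius $1/2 \leq r_0$).

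\textbf{Chain step.} For the second piece $\sum_j \mu(B_j)|u_{B_j}-u_{B_0}|^\sigma$, I would fix a geodesic $\gamma_j$ from $x$ to $z_j$ and place points $x_0^j=x,x_1^j,\ldots,x_{N_j}^j=z_j$ on $\gamma_j$ at spacing $\leq 1/2$, so that $N_j \leq 2R+1 \leq \lambda R$ under the hypothesis $R \geq 4\lambda + L$. Writing $B_i^j := B(x_i^j,1/2)$, a telescoping identity plus H\"older's inequality yields $|u_{B_j}-u_{B_0}|^\sigma \leq N_j^{\sigma-1}\sum_i|u_{B_{i+1}^j}-u_{B_i^j}|^\sigma$, from which the factor $(\lambda R)^{\sigma-1}$ in the conclusion emerges. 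For each link I would introduce the intermediate ball $\tilde B_i^j := B(x_i^j,1)$ containing $B_i^j \cup B_{i+1}^j$, compare both small-ball averages to $u_{\tilde B_i^j}$ by Jensen, and then apply $(P_{loc})$ on $\tilde B_i^j$ (radius $1 \leq r_0$) to bound the link by $\tfrac{K\,f(1)\,C}{\mu(\tilde B_i^j)}\int_{B(x_i^j,L)}g_u^\sigma d\mu$ with an absolute constant $K$.

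\textbf{Assembly.} Using~(\ref{eqmain}) at the appropriate scales -- to control $1/\mu(\tilde B_i^j)$ in terms of $1/\mu(B(x,1/2))$ via $B(x,1/2)\subset B(x_i^j,R+1/2)$, to bound $\sum_j\mu(B_j)$ by $\mu(B(x,\lambda R))$, and to convert $\mu(B(x,\lambda R))/\mu(B(x,1/2))$ into a single value of $f$ -- the accumulated measure ratios produce a factor of $f$ at a scale bounded by $4\lambda R$. Since the $L$-dilates $\{B(x_i^j,L)\}_i$ along a single geodesic have pointwise overlap $\mathcal{O}(L)$, the inner sum $\sum_i\int_{B(x_i^j,L)}g_u^\sigma d\mu$ collapses to $\int_{B(x,\lambda R)}g_u^\sigma d\mu$, using $B(x_i^j,L)\subset B(x,R+L)\subset B(x,\lambda R)$. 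Combining with the estimate from Step~1 for the first piece yields the claimed inequality.

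\textbf{Main obstacle.} The conceptual steps are standard; the delicate part is the bookkeeping required to land on the exact constants $\lambda=f(7.5)+1$ and $C_0 = 2^{4\sigma}C f(3.5)^{\sigma+2}$. The power $\sigma+2$ of $f(3.5)$ accumulates from the several places where unit-scale volume ratios enter -- the link-by-link Jensen/$(P_{loc})$ comparison, the overlap counts for both the cover $\{B_j\}$ and the $L$-dilates, and the final translation between $\mu(B(x,1/2))$ and $\mu(B(x,\lambda R))$ -- and the particular numerical values $3.5$ and $7.5$ should emerge from the specific geometric radii ($1/2$, $1$, $L$, \ldots) once the bookkeeping is done carefully. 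The most subtle point is verifying that the relevant covering multiplicity can indeed be bounded uniformly by $f(7.5)+1$, which depends essentially on the growth condition being stated uniformly in the center $x$.
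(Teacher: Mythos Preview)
Your direct chaining approach is natural but genuinely different from the paper's, and as written it does not deliver the stated single factor $f(4\lambda R)$. The paper does not chain in $X$: it fixes a $1$-discretization $(Y,\rho,\nu)$, proves a discrete Poincar\'e inequality on the graph $Y$ (Theorem~\ref{PGr}), and transfers back. Concretely one splits $\int_{B(x,R)}|u-\eta|^\sigma\,d\mu\le (I)+(II)$, where $(I)$ sums $\int_{B(y,1)}|u-\tilde u(y)|^\sigma$ over $y\in Y\cap B(x,R+1)$ with $\tilde u(y)=u_{B(y,1)}$, and $(II)=2^{\sigma-1}\sum_y \nu(y)\,|\tilde u(y)-\eta|^\sigma$. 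Term $(I)$ uses $(P_{loc})$ at unit scale together with Lemma~\ref{Bound on multiplicity}; for $(II)$ one takes $\eta=\tilde u_r$ and applies Theorem~\ref{PGr}. The key is that the graph inequality produces only \emph{one} large-scale factor: in its proof the chain along a graph geodesic is a sum of $|\delta u|^\sigma$ with respect to the \emph{counting} measure, and the single conversion to $\nu$ costs exactly one $f(2r)$. Lemma~\ref{important lemma} then turns $\|\delta\tilde u\|_\sigma$ into $\|g_u\|_\sigma$ at the price of unit-scale factors $f(3.5)^\sigma f(3)$, and $\lambda=f(7.5)+1$ is (a bound on) the quasi-isometry constant of Lemma~\ref{qi}, not a covering multiplicity as you suggest.

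The gap in your assembly is that your uses of~(\ref{eqmain}) in steps (a) and (c) each cost a large-scale $f$, not one between them. From $B(x,\tfrac12)\subset B(x_i^j,R+\tfrac12)$ and~(\ref{eqmain}) centred at $x_i^j$ you get only $1/\mu(\tilde B_i^j)\le f(R+\tfrac12)/\mu(B(x,\tfrac12))$ --- already one factor $f(R+\tfrac12)$. Converting $\sum_j\mu(B_j)/\mu(B(x,\tfrac12))\le M\,f(R+\tfrac12)$ then gives a second. Your bound is therefore of order $R^{\sigma-1}f(cR)^2$; for polynomial $f(R)=R^\alpha$ this is $R^{\sigma-1+2\alpha}$ rather than the claimed $R^{\sigma-1+\alpha}$, so the argument does not prove the stated theorem. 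The continuous chain cannot avoid the extra factor, because every link carries a $1/\mu(\tilde B_i^j)$ and unit balls centred at points $\sim R$ apart are only comparable through a large-scale $f$. The discretization sidesteps this precisely because the graph chain involves no small-ball volumes until the single global counting-to-$\nu$ conversion.
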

The constant $C_0 R^{\sigma-1}f(4\lambda R)$ is called the Poincar\'e constant throughout this paper. It is an upper bound on the best constant for which the above inequality holds. In light of the Bishop-Gromov volume comparison theorem, it is interesting to study the growth of the Poincar\'e constant when $(X,d,\mu)$ satisfies a notion of a lower bound on Ricci curvature. 

Non-negative Ricci curvature on Riemannian manifolds plays a crucial role in establishing the existence of positive Green's functions and bounded harmonic functions, estimates on heat kernel and green functions, parabolic Harnack inequalities \cite{Li}. These results are also proved using variants of doubling measures and local Poincar\'e inequalities to replace non-negative Ricci curvature condition \cite{HK}, \cite{SC}. We refer to section \ref{sec4} for the definition of a doubling measure. If $\mu$ is a doubling measure on a measured metric space $(X,d,\mu)$, then the growth of volume is polynomial. If it also supports $P_{loc}$ then as a consequence of Theorem \ref{main} $(X,d,\mu)$ supports a uniform $(\sigma, \beta, \sigma)$-Poincar\'e inequality (see Corollary \ref{sigma beta sigma}), i.e., there exist positive constants $C_0, r, \beta$ and $\lambda\geq 1$ such that for any $u\in C(X)$ and its upper gradient $g_u$,
$$\int_{B(x,R)} |u - u_R|^{\sigma} \ d\mu \leq C_0 R^{\beta}\int_{B(x,\lambda R)}g_u^{\sigma} d\mu(z), \quad \forall \ x\in X, \ \forall R\geq r.$$
Some interesting examples of such metric spaces are complete Riemannian manifolds with non-negative Ricci curvature, finitely generated groups with polynomial growth, Lie groups with Carnot-Carath\'eodory metrics, topological manifolds with Ahlfors regular measures \cite{HK}. Besson, Courtois, and Hersonsky established a family of $(\sigma,\beta,\sigma)$-Poincar\'e inequality on complete Riemannian manifolds with Ricci curvature bounded below when the growth of volume is polynomial, and volumes of unit balls are bounded below by a positive constant in \cite{HCB}. However, by Theorem \ref{main}, for a Riemannian manifold with Ricci curvature bounded below it suffices to satisfy (\ref{eqmain}) for some polynomial $f(R)=vR^\alpha$ ($v>0$) to support a $(\sigma,\beta,\sigma)$-Poincar\'e inequality.

Next, we consider $\delta$-hyperbolic metric spaces in the sense of Gromov. The volume of a ball on a Riemannian manifold with negative sectional curvature grows exponentially as a function of its radius. Since $\delta$-hyperbolic spaces are defined generalizing certain metric properties of negatively curved Riemannian manifolds, it is natural to consider a Borel measure with exponential growth on them. The entropy of a measured metric space $(X,d,\mu)$ is defined by 
$${\rm Ent}(X,d,\mu)=\liminf_{R\to \infty}\frac{1}{R}\ln(\mu(B(x,R))).$$
It is independent of the choice of $x.$ Besson, Courtois, Gallot, and Sambusetti established a Bishop-Gromov volume comparison theorem on $\de$-hyperbolic measured metric spaces with volume entropy bounded above \cite{BCS}. They showed that if the measure is invariant under certain group action, then the growth of volume is exponential and, depends on the entropy of the space. They also pointed out that an upper bound on entropy may be considered as a lower bound on Ricci curvature in a weak sense. As a consequence of Theorem \ref{main} and the volume comparison theorem on $\delta$-hyperbolic spaces, we obtain the following theorem relating the growth of the Poincar\'e constant to the entropy of the space.
\begin{thm}\label{main3} Let $(X,d,\mu)$ be a measured $\de$-hyperbolic space which supports $P_{loc}$ for $r_0\geq 1$ and $\sigma\geq 1$. Let $\Gamma$ be a group acting on $X$ isometrically and properly such that the diameter of the quotient space $\Gamma \backslash X$ is bounded by $D$. Suppose the action of $\Gamma$ is also measure preserving and the entropy of $(X,d,\mu)$ is bounded by $H$. Then there exist $C_0(\delta,D,H,\mu, \sigma)>0$ and $\lambda \geq 1$ such that for any $u\in C(X)$ and its upper gradient $g_u$,
$$\int_{B(x,R)} |u - u_R|^{\sigma} d\mu \leq C_0 R^{\sigma+6HD+\frac{21}{4}}e^{12\lambda HR}\int_{B(x,2\lambda R)}g_u^{\sigma} d\mu , \ \ \forall R\geq \frac{5}{2}(7D+4\delta) , \ \forall \ x\in X.$$
\end{thm}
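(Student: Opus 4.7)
The strategy is to reduce Theorem~\ref{main3} to Theorem~\ref{main} by extracting an explicit growth function $f$ for $\mu(B(x,R))/\mu(B(x,1/2))$ from the geometric hypotheses; since $(P_{loc})$ is already assumed, this is the only missing input.

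First I would invoke the Bishop--Gromov-type volume comparison theorem of \cite{BCS}. Under our hypotheses --- $\delta$-hyperbolicity, an isometric, properly discontinuous, measure-preserving $\Gamma$-action with $\mathrm{diam}(\Gamma\backslash X)\leq D$, and entropy at most $H$ --- that theorem controls $\mu(B(x,R))$ by an explicit expression of the form $A(\delta,D,H,\mu)\cdot R^{\alpha(D,H)}\cdot e^{c(\delta)HR}$. The polynomial prefactor arises from counting $\Gamma$-orbit representatives in a ball of radius $\sim D$, and the exponential encodes the entropy bound, sharpened via the hyperbolic tripod inequality which produces the numerical constant $c(\delta)$. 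The $\Gamma$-invariance of $\mu$ together with the diameter bound make $\mu(B(x,1/2))$ uniformly bounded below in $x$, so dividing yields an increasing function
$$f(R) = A' R^{\alpha} e^{cHR}, \qquad R\geq 1/2,$$
verifying the growth hypothesis (\ref{eqmain}).

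Second, I would substitute this $f$ into Theorem~\ref{main}. That theorem produces a Poincar\'e constant
$$2^{4\sigma}Cf(3.5)^{\sigma+2}(\lambda R)^{\sigma-1}f(4\lambda R), \qquad \lambda=f(7.5)+1,$$
in which $f(3.5)$, $f(7.5)$, $\lambda$ and the leading coefficient $(4\lambda)^{\alpha}$ of $f(4\lambda R)$ are all $R$-independent and depend only on $\delta, D, H, \mu$; they absorb into the final constant $C_0(\delta,D,H,\mu)$. The residual $R$-dependence is $R^{\sigma-1+\alpha}\,e^{4c\lambda HR}$. Matching this to the stated form $R^{\sigma+\frac{21}{4}+6HD}\,e^{12\lambda HR}$ forces $\alpha=\tfrac{25}{4}+6HD$ and $c=3$; both values should emerge from careful bookkeeping in \cite{BCS}. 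The admissible range $R\geq 4\lambda+L$ of Theorem~\ref{main} then needs to be translated to the stated $R\geq\tfrac{5}{2}(7D+4\delta)$ by absorbing any residual lower-order dependence into $C_0$, using that $\lambda$ and $L$ are controlled by $\delta,D,H,\mu$.

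The principal technical obstacle is the first step: extracting from \cite{BCS} the precise polynomial exponent $\alpha=\tfrac{25}{4}+6HD$ and the exponential rate $c=3$. Once $f$ is in hand with these constants, Theorem~\ref{main} delivers the conclusion by a direct substitution and constant chasing; everything else is bookkeeping.
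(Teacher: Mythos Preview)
Your strategy is exactly the paper's: harvest a growth function from the Besson--Courtois--Gallot--Sambusetti volume comparison and feed it into one of the paper's main theorems. The paper, however, does not apply Theorem~\ref{main} but rather Theorem~\ref{main2}, the variant that separates the upper bound $\mu(B(x,R))\le V(R)$ from the lower bound $\mu(B(x,\tfrac12))\ge \tfrac1c$. That version produces a Poincar\'e constant involving $V(2\lambda R)$ rather than $f(4\lambda R)$, and this factor of two in the argument is what yields the exponent $12\lambda HR$ in the stated inequality.

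This matters because your expectation that ``$c=3$ should emerge from careful bookkeeping in \cite{BCS}'' is off: Theorem~1.9(i) of \cite{BCS} gives $\mu(B(x,R))\le A\,R^{25/4+6HD}e^{6HR}$, so the exponential rate is $6$, not $3$. Plugging $f(R)\sim e^{6HR}$ into Theorem~\ref{main} would give $f(4\lambda R)\sim e^{24\lambda HR}$, not $e^{12\lambda HR}$. Since $\lambda$ is only asserted to exist, your route still proves the theorem after replacing $\lambda$ by $2\lambda$, but you would not recover the stated constants; the paper's choice of Theorem~\ref{main2} is what makes the numerics line up. The polynomial exponent $\alpha=\tfrac{25}{4}+6HD$ you predicted is correct. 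Finally, the threshold $R\ge\tfrac{5}{2}(7D+4\delta)$ is not obtained by absorbing lower-order terms into $C_0$; it is the range of validity of the volume comparison in \cite{BCS}, and the paper also imposes $r=7D+4\delta\ge 2$ so that $V(4.5)=V(2.5)=V_0$ and the constants in Theorem~\ref{main2} simplify.
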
 
For an explicit description of the constants, we refer to Theorem \ref{Phy} in Section \ref{sec4}. The above theorem generalizes the Poincar\'e inequality on Riemannian manifolds with Ricci curvature bounded below stated in section 10.1 in \cite{HK} on Gromov hyperbolic spaces. Cheeger showed that if a measured metric space admits $P_{loc}$, it has a nice local structure \cite{Ch}. Since on a $\delta$-hyperbolic space, local geometry and topology may be anything within a radius of $\delta$ of a point, $P_{loc}$ is a crucial assumption in the above theorem.  An upper bound of volume entropy only gives a bound on the growth of volume asymptotically. Hence a stronger notion of lower bound on Ricci curvature is required to replace $P_{loc}$ in the theorem above. If a group $\Gamma$ acts on a Riemannian manifold cocompactly, then it satisfies the $P_{loc}$ condition. K. Akutagawa, G. Carron, and R. Mazzeo showed that a large class of singular Riemannian manifolds, namely stratified spaces, also support $P_{loc}$ \cite{ACM}. Hence a $\delta$-hyperbolic stratified space admitting a group action as described in Theorem \ref{main3} satisfies a global Poincar\'e inequality with the exponential growth of Poincar\'e constant. We refer to Section 7 in \cite{BCS} for examples of such $\de$-hyperbolic spaces.

More generally, when a discrete group acts on a measured metric space, we study the growth of the Poincar\'e constant in terms of the growth of the group. Consider a discrete subgroup $\Gamma$ of isometries of $(X,d,\mu)$ acting on it properly such that the quotient space $\Gamma \backslash X$ is compact. Define,
\be
F_{\Gamma}(R)=|\Gamma x\cap \overline{B(x,R)}|.\label{growth of group}
\ee
Here $|.|$ denotes the cardinality of the set. $F_{\Gamma}(R)$ is independent of the choice of $x$, and it determines the growth of $\Gamma$ with respect to $R$.   Suppose the action of $\Gamma$ on $X$ is free, measure-preserving, and the quotient space $\Gamma \backslash X$ is compact. If the volume and the diameter of $\Gamma \backslash X$ are bounded above by $V$ and $D$, respectively, then
$$\mu(B(x,R))\leq VF_{\Gamma}(R+D), \quad \forall R\geq D.$$ 
When $\Gamma\backslash X$ supports a Poincar\'e inequality, we show that $(X,d,\mu)$ supports a $P_{loc}$ in Section \ref{sec5}. Then Theorem \ref{main} implies that $(X,d,\mu)$ admits a uniform Poincar\'e inequality (see Theorem \ref{PCov}).

A lower bound on Ricci curvature plays an important role in establishing a local Poincar\'e inequality on Riemannian manifolds \cite{Bu}, \cite{HK}, \cite{HCB}. Sturm, Lott, and Villani defined notions of lower bound on Ricci curvature on length spaces with probability measures in terms of optimal transports on Wasserstein spaces in the seminal papers \cite{Sturm1}, \cite{Sturm2}, \cite{LV1}. These are called $CD(K, N)$ spaces, where $K$ is a lower bound on Ricci curvature, and $N$ is an upper bound on the dimension. Lott and Villani proved Poincar\'e inequality on $CD(K, N)$ spaces, which are non-branching in \cite{LV1},\cite{LV2}. Later Rajala proved a local Poincar\'e inequality on $CD(K,\infty)$ spaces in the sense of Lott-Villani and Sturm without imposing the non-branching condition in \cite{Rajala} \cite{Rajala2}. We refer to \cite{Rajala} for the definition of $CD(K,\infty)$ spaces. 
\begin{thm} \cite{Rajala} \cite{Rajala2} Suppose that $(X,d,\mu)$ is a $CD(K,\infty)$ space with $K\leq 0$. Then for any continuous function $u$ on $X$ and for any upper gradient $g_u$ of $u$
\begin{align}
 	\int_{B(x,R)} |u-u_{B(x,R)}|\ d\mu \leq c(K,R) \int_{B(x,2R)} g_u\ d\mu, \ \ \forall R>0, \ \ \forall x\in X.
\end{align}
\end{thm}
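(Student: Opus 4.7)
The plan is to deduce the $(1,1)$-Poincar\'e inequality via a dynamical optimal-transport argument in which $CD(K,\infty)$ enters through the $K$-convexity of the relative entropy along $W_2$-geodesics. Fix $B = B(x,R)$ and set $\mu_B := \mu(B)^{-1}\mathbf{1}_B\mu$. The elementary symmetrization
\[ \int_B |u-u_B|\,d\mu \;\leq\; \mu(B)\iint_{B\times B}|u(y)-u(z)|\,d\mu_B(y)\,d\mu_B(z), \]
combined with the upper-gradient bound $|u(\gamma_0)-u(\gamma_1)| \leq \ell(\gamma)\int_0^1 g_u(\gamma_t)\,dt$ and the fact that every geodesic between points of $B$ remains in $B(x,2R)$, reduces the problem to constructing a probability measure $\Pi$ on the space $\mathrm{Geo}(X)$ of constant-speed geodesics, with $(e_0,e_1)_*\Pi$ a coupling of $\mu_B$ with itself realizing the double integral, and each intermediate marginal $(e_t)_*\Pi$ of density bounded in $L^\infty(\mu)$ by some $M(K,R)$. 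Fubini then yields the inequality with $c(K,R)=2R\,\mu(B)\,M(K,R)$.

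Constructing $\Pi$ is the substantive step. Since the two endpoint marginals are equal, the direct optimal transport from $\mu_B$ to $\mu_B$ is trivial and useless. The natural device is to decompose $\mu_B\otimes\mu_B$ as a superposition of optimal couplings between small pieces, e.g.\ by partitioning $B$ into sub-balls $\{B_i\}$ and running the optimal $W_2$-transport between each pair $(\mu_{B_i},\mu_{B_j})$ separately. Each pairwise transport has $W_2\leq 2R$, and the $CD(K,\infty)$ entropy-convexity
\[ \mathrm{Ent}_\mu\bigl((e_t)_*\Pi_{ij}\bigr) \;\leq\; (1-t)\,\mathrm{Ent}_\mu(\mu_{B_i}) + t\,\mathrm{Ent}_\mu(\mu_{B_j}) - \tfrac{K}{2}t(1-t)W_2^2(\mu_{B_i},\mu_{B_j}) \]
gives an integral control on the intermediate densities. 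Summing over $(i,j)$ with the correct weights produces a candidate $\Pi$ with both endpoint marginals equal to $\mu_B$ and intermediate entropy bounded by $\log\mu(B)^{-1} + O(|K|R^2)$ (using $K\leq 0$).

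The main obstacle is that an entropy bound is strictly weaker than the $L^\infty$ density bound required, and — since the theorem claims no non-branching hypothesis — standard arguments relying on uniqueness of $W_2$-geodesics are unavailable. I would resolve this gap by a Rajala-type restriction mechanism: given any optimal plan whose intermediate density $\rho_t$ exceeds a threshold $\Lambda$ on a set $E$ of positive $\mu$-measure, restrict $\Pi$ to the subset of geodesics passing through $E$ at time $t$ and renormalize. On a $CD(K,\infty)$ space the restricted plan remains optimal between its new endpoints, and applying the entropy inequality to it compares its intermediate entropy (bounded below by $\log\Lambda$ on a macroscopic set) to its endpoint entropies (which inherit the $\|\mu_B\|_\infty$ bound). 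The resulting quantitative inequality, iterated finitely many times or passed to a limit, forces $\Lambda\leq \|\mu_B\|_\infty\,e^{C_0|K|R^2}$ for an absolute constant $C_0$. Because this scheme uses only the Wasserstein structure and the restriction lemma, never pointwise uniqueness of geodesics, it carries over to branching $CD(K,\infty)$ spaces and yields $c(K,R) \sim R\,e^{C_0|K|R^2}$, matching the classical constant as $K\to 0^-$ and degrading exponentially in $|K|R^2$ under negative curvature as expected.
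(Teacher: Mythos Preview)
The paper does not contain a proof of this statement. The theorem is quoted from Rajala's work (references \cite{Rajala}, \cite{Rajala2}) and used as a black box; the only addition the authors make is the remark that Jensen's inequality upgrades it to the $\sigma$-version stated immediately afterwards. There is therefore no in-paper argument to compare your proposal against.

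That said, what you have written is essentially a faithful outline of Rajala's own proof. The reduction of the Poincar\'e inequality to an $L^\infty$ bound on the intermediate densities of a dynamical transport plan, the observation that $CD(K,\infty)$ a~priori yields only an entropy bound along $W_2$-geodesics, and the restriction-and-iterate mechanism that promotes this entropy control to a pointwise density control without any non-branching hypothesis are precisely the ingredients of \cite{Rajala}. Your device of decomposing the product coupling $\mu_B\otimes\mu_B$ into a superposition of pairwise optimal transports between pieces of a fine partition is one standard way to feed the (non-optimal) product plan through Rajala's optimal-plan density estimate; it requires a limiting argument as the mesh of the partition tends to zero (since on each piece you only control the optimal coupling, not the product one), but this causes no real difficulty for continuous $u$. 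The constant $c(K,R)\sim R\,e^{C_0|K|R^2}$ you arrive at is indeed the one obtained in \cite{Rajala}.
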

From the proof of the above theorem, we observe the following result after applying Jensen's inequality.
\begin{prop}\label{Rajala} Suppose that $(X,d,\mu)$ is a $CD(K,\infty)$ space with $K\leq 0$. Then there exists a positive constant $c(K,\sigma,R)$  such that for any continuous function $u$ on $X$ and for any upper gradient $g_u$ of $u$
\bea
 	\int_{B(x,R)} |u-u_{B(x,R)}|^{\sigma}\ d\mu \leq c(K,\sigma,R) \int_{B(x,2R)} g_u^{\sigma}\ d\mu, \ \ \forall R>0, \ \ \forall x\in X, \ \sigma\geq 1.
\eea
$c(K,\sigma,R)$ is continuous in $R.$
\end{prop}
As a consequence of Theorem \ref{main}, we have the following theorem.
\begin{thm}\label{CDK} Let $\Gamma$ be a discrete subgroup of isometries of a measured metric space $(X,d,\mu)$ acting on it freely and properly such that the diameter of the quotient space $\Gamma\backslash X$ is bounded by $D$ . Suppose the action of $\Gamma$ is measure preserving and $\bar{d},\bar{\mu}$ denote the quotient metric and the quotient measure, respectively. If $(\Gamma\backslash X,\bar{d}, \bar{\mu})$ is a $CD(K,\infty)$ space with $K\leq 0$ then there exist positive constants $C(K,\sigma)$ such that for any $\sigma\geq 1,$ $u\in C(X)$ and its upper gradient $g_u$,
	\be
	\int_{B(x,R)} |u-u_R|^{\sigma} \ d\mu \leq 2^{4\sigma-2}CV_0\lambda^{4} R^{\sigma-1} F_{\Gamma}(2\lambda R)\int_{B(x,2\lambda R)} g_u^{\sigma} \ d\mu, \ \ \forall  R\geq r,\ x \in X.
	\ee 
where $V_0$ is the volume of $\Gamma\backslash X$ and $\lambda=V_0+D$.	
\end{thm}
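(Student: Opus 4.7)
The plan is to verify the hypotheses of Theorem \ref{main} for $(X,d,\mu)$ and then rewrite the resulting Poincar\'e constant in the desired form. Two things must be established: a local Poincar\'e inequality $(P_{loc})$ on $X$ with $r_0\geq 1$, and a growth bound of the form (\ref{eqmain}) in which $f$ can be controlled by $F_\Gamma$.

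For the local Poincar\'e inequality, I would lift the inequality of Theorem \ref{Rajala} from the quotient $(\Gamma\backslash X,\bar d,\bar\mu)$ to $X$. Since the $\Gamma$-action is free, proper, isometric, with compact orbit space, the minimum displacement $\rho:=\inf\{d(x,\gamma x):x\in X,\ \gamma\in\Gamma\setminus\{e\}\}$ is strictly positive. For $x\in X$ and any $R$ with $2R\leq \rho/2$, the projection $\pi:B(x,2R)\to B(\pi(x),2R)$ is an isometric bijection, so a continuous function $u$ on $B(x,2R)\subset X$ corresponds to a continuous function on $B(\pi(x),2R)\subset \Gamma\backslash X$ with the same upper gradient; Theorem \ref{Rajala} then yields $(P_{loc})$ on $X$ for such small $R$ with dilation $L=2$ and constant $\sup_{R\in[0,\rho/4]} c(K,\sigma,R)$, which is finite by continuity. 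To promote this to $r_0\geq 1$, one chains finitely many balls of radius $\leq\rho/4$ covering a geodesic from the centre to the boundary of $B(x,R)$; at the cost of enlarging $C$ and $L$ (depending on $\rho$ and $K$), this produces $(P_{loc})$ at any prescribed $r_0\geq 1$.

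For the growth bound, fix a relatively compact fundamental domain $F\subset X$ of diameter $\leq D$ and $\bar\mu$-mass $V:=\bar\mu(\Gamma\backslash X)$. Every $\Gamma$-translate $\gamma F$ meeting $B(x,R)$ forces $d(\gamma x_0,x)\leq R+D$ for a fixed basepoint $x_0\in F$, so $\mu(B(x,R))\leq V\,F_\Gamma(R+D)$ for all $R\geq D$. A uniform lower bound $\mu(B(x,1/2))\geq v_0>0$ follows from compactness of $\Gamma\backslash X$ together with the local isometry of $\pi$ on sufficiently small balls. These two estimates give (\ref{eqmain}) with $f(R):=VF_\Gamma(R+D)/v_0$. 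Applying Theorem \ref{main} then yields a Poincar\'e constant of the form $C_0(\lambda R)^{\sigma-1}f(4\lambda R)=C_0' R^{\sigma-1}F_\Gamma(4\lambda R+D)$, after absorbing $V$, $v_0$, and $\lambda^{\sigma-1}$ into $C_0'$. For $R\geq D$ we have $4\lambda R+D\leq(4\lambda+1)R$, so setting $\lambda':=(4\lambda+1)/2\geq\lambda$ gives $F_\Gamma(4\lambda R+D)\leq F_\Gamma(2\lambda' R)$; enlarging the integration ball from $B(x,\lambda R)$ to $B(x,\lambda' R)$ only increases the right-hand side, yielding the stated inequality for all $R\geq r:=\max(D,4\lambda+L)$.

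The main obstacle is the first step: if $\rho$ is small compared to $1$, the direct lift of Theorem \ref{Rajala} only delivers $(P_{loc})$ at very small scales, and promoting this to the single-constant, single-dilation form required by Theorem \ref{main} with $r_0\geq 1$ requires a careful telescoping of quotient Poincar\'e inequalities on overlapping balls, rather than a naive triangle-inequality argument. Controlling the resulting constants uniformly in $\sigma$, $K$, $D$, and $\rho$ is where most of the technical work will concentrate; the volume-growth step and the algebraic manipulation to replace $f(4\lambda R)$ by $F_\Gamma(2\lambda' R)$ are comparatively routine.
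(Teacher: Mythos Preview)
Your overall strategy matches the paper's: lift Rajala's inequality from the quotient to obtain $(P_{loc})$ on $X$, control the volume growth via $F_\Gamma$ and the fundamental-domain argument, and then invoke the main structural theorem. Two points of divergence are worth noting.

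First, the paper routes the argument through Theorem \ref{main2} (packaged as Theorem \ref{PCov}) rather than Theorem \ref{main}. Since you have already observed the uniform lower bound $\mu(B(x,\tfrac12))\geq v_0>0$ from compactness of the quotient, you may as well use Theorem \ref{main2}: it produces the factor $V(2\lambda R)$ directly and spares the detour through $f(4\lambda R)$ and the redefinition $\lambda':=(4\lambda+1)/2$.

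Second, and more importantly, the paper does not attempt the chaining-promotion step you flag as the main obstacle. It simply rescales $d$ so that $sys_\Gamma\geq\max\{4L,18\}$ (see Theorem \ref{PCov} and the sentence following its proof); after rescaling, the projection is an isometry on balls of radius up to $sys_\Gamma/(4L)\geq 1$, and Lemma \ref{weak inequality} delivers $(P_{loc})$ with $r_0\geq 1$ in one stroke. The constants $C_0$, $r$, $\lambda$ in Theorem \ref{CDK} are allowed to depend on all the data, so rescaling is harmless. Your chaining idea is not wrong---with local doubling (available from compactness of the quotient) a Jerison-type telescoping over the finite range $[\rho/4,1]$ would go through---but it is considerably more work than the one-line rescaling the paper uses, and the ``careful telescoping'' you anticipate is entirely avoidable.
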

Therefore, any covering space of a compact $CD(K,\infty)$ space with $K\leq 0$ satisfies a global Poincar\'e inequality as above. Moreover, if $X$ is simply connected, then the growth of the Poincar\'e constant depends on the growth of the fundamental group of the quotient space as described by Theorem \ref{CDK}. The space of all compact $CD(K,\infty)$ metric measured spaces is quite large. An interesting class of examples arises from differential geometry as Gromov-Hausdorff limits of compact Riemannian manifolds with Ricci curvature bounded below and limits of geometric flows if they exist. In particular if the quotient space $\Gamma\backslash X$ in Theorem \ref{main3} is a $CD(K,\infty)$ space, then $X$ satisfies  $P_{loc}$. \\
\\
{\bf Idea of proof and structure of the paper:} In this paper, the scheme of the proof of the existence of such a uniform Poincar\'e inequality is similar to the one used in \cite{HCB} and in \cite{SLT}. The authors showed that a complete Riemannian manifold $(M,g)$ with  polynomial growth of volume supports a uniform Poincar\'e inequality if and only if it satisfies a local Poincar\'e inequality and a graph approximation of $(M,g)$ supports a discrete version of Poincar\'e inequality \cite{HCB}, \cite{SLT}. By considering a more general growth condition ($\ref{eqmain}$) we are able to express the Poincar\'e constant in terms of the growth of volume, which allows us to apply the volume comparison theorem in various contexts. Techniques from the proof of classical Bishop-Gromov volume comparison theorem are used crucially to prove existing Poincar\'e inequalities on Riemannian manifolds. It was known that the growth of the Poincar\'e constant depends on the growth of volume. Theorem \ref{main} of this paper shows this dependency explicitly. The assumption on the lower bound on $r_0$ in Theorem \ref{main} is required to choose a graph discretization of $X$ with canonical combinatorial distance one. If $r_0$ is bounded below by a positive constant then the required bound may be achieved by scaling the metric $d$ suitably.

In Section \ref{sec2}, we establish a Poincar\'e inequality for a measured metric graph when it satisfies (\ref{eqmain}), which is an improvement of the Poincar\'e inequality established in \cite{HCB}. In \cite{HCB}, a weak Poincar\'e inequality for a measured metric graph is established under the assumption of polynomial growth of the measure of balls and a uniform lower bound on the measure of vertices, whereas, with an improvement in the proof, we obtain a strong Poincar\'e inequality without the extra assumption of uniform lower bound on the measure of vertices. Moreover, we show that the Poincar\'e inequality can be further improved if we assume a uniform lower bound on the measure of vertices. 

In Section \ref{sec3}, we first prove that an $\epsilon$-discretization of $(X,d,\mu)$, which is a measured metric graph, is roughly isometric to $(X,d,\mu)$ under the assumption of growth condition (\ref{eqmain}). Moreover, we obtain a growth function for the $\epsilon$-discretization satisfying (\ref{eqmain}) in terms of the growth function for $(X,d,\mu)$ satisfying (\ref{eqmain}). In this method of approximation by a graph, we try to emulate the foundational work of Kanai in \cite{Kanai}, \cite{Kanai2} and its later improvements made by Coulhon and Saloff-Coste in \cite{SLT}. Later in this section, with the assumption of the existence of a local Poincar\'e inequality on $(X,d,\mu)$ and the growth condition (\ref{eqmain}), we get a uniform Poincar\'e inequality from the Poincar\'e inequality established on its $\epsilon$-discretization which completes the proof of Theorem \ref{main}.  
 
In Section \ref{secpoly} we discuss examples of measured metric spaces with polynomial growth of volume and establish a $(\sigma,\beta,\sigma)$-type uniform Poincar\'e inequality. We study a family of uniform Poincar\'e inequalities on a Gromov hyperbolic space satisfying $P_{loc}$ with a bound on volume entropy and prove Theorem \ref{main3} in Section \ref{sec4}. The inspiration to consider Gromov $\delta$- hyperbolic spaces under such conditions is obtained from the volume comparison theorems proved in \cite{BCS}, which gives us an exponential growth function satisfying (\ref{eqmain}). Section \ref{sec5} is devoted to developing a relationship between the growth of Poincar\'e constants and the growth of groups. The existence of a Poincar\'e inequality on a covering space is also discussed when its quotient space admits a Poincar\'e inequality. We prove Theorem \ref{CDK} in this section.
\\
\\
{\bf Acknowledgement:} We sincerely thank G\'erard Besson and Gilles Courtois for introducing the Bishop-Gromov volume comparison theorem on Gromov hyperbolic spaces and its applications to us at the CIMPA school on Finsler Geometry at Varanasi. The first author is supported by the DST-INSPIRE faculty research grant, and the second author is supported by the Kishore Vaigyanik Protsahan Yojana fellowship.

\section{Poincar\'e Inequality on metric measured graphs}\label{sec2} Let $Y=(V,E)$ be a connected graph with a measure $\nu$, where $V,E$ denote the set of vertices and edges, respectively. We denote  $x\sim y$ if $x$ is adjacent to $y$. Define the distance  $\rho$ on $Y$ as the canonical combinatorial distance as follows:
 
$\rho(x,y)=1$ if and only if $x\sim y $. The length of a path $\gamma_{x,y}$ joining two points $x$ and $y$ is the number of edges in $\gamma_{x,y}$. Define $\rho(x,y)$ as the infimum of lengths of paths joining $x$ and $y.$ A graph $Y$ with a canonical combinatorial distance $\rho$ and a measure $\nu$,  is called a metric measured graph.
  
Let $u:V\to \mathbb{R}$ be a function. The integration with respect to a measure $\nu$ is defined as
\be
\int_Fu(x)d\nu(x)=\sum_{x\in F}u(x)\nu(x)  \ \ {\rm for} \ \ {\rm any} \ \ F\subset Y.
\ee
The point-wise $l^{\sigma}$-norm of the gradient of $u$ at a vertex $x$ is defined as
\be
|\delta u|_{\sigma}(x)=\left( \sum_{x\sim y}|u(x)-u(y)|^{\sigma} \right)^{\frac{1}{\sigma}}.
\ee
$L^{\sigma}$-norm of the gradient of $u$ with respect to $\nu$ over a $F\subset Y$ is 
\be
\|\delta u\|_{\sigma,F}^{\sigma}=\int_F|\delta u|_{\sigma}^{\sigma}d\nu .
\ee
Next, we establish a Poincar\'e inequality on a  metric measured graph $(Y,\rho,\mu)$ that satisfies a growth condition defined in (\ref{eqmain}). 
  
\begin{thm}\label{PGr} Let $(Y,\rho,\nu)$ be a metric measured graph and $f:(0,\infty)\to \mathbb{R}$ be a function such that  
$$\frac{\nu(B(x,R))}{\nu(x)}\leq f(R), \quad  \forall\ x\in Y \ {\rm and} \ R\geq r_0>0.$$
 Then for any $u:Y\to \mathbb{R}$, $\sigma\geq 1$ and $R\geq r_0>0,$ 
	$$\int_{B(p,R)}|u-u_R|^\sigma d\nu\leq 2^{\sigma} R^{\sigma-1}f(2R)\int_{B(p,R)}|\delta u|_{\sigma}^{\sigma}d\nu , \ \ \ \forall\ p\in Y.$$
\end{thm}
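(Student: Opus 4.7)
The plan is to follow the path-integration scheme underlying Theorem 4.2 of \cite{HCB}, but with two sharpenings: the integration paths will be routed through the centre $p$ of the ball, and the $\ell^2$-definition of $|\delta u|$ will be exploited pointwise so that no lower bound on $\nu(z)$ is needed. I start by applying Jensen's inequality to the identity $u(x)-u_R=\frac{1}{\nu(B(p,R))}\int_{B(p,R)}(u(x)-u(y))\,d\nu(y)$, which produces
\[
\int_{B(p,R)}|u(x)-u_R|^\sigma d\nu(x) \;\leq\; \frac{1}{\nu(B(p,R))}\iint_{B(p,R)^2}|u(x)-u(y)|^\sigma d\nu(x)\,d\nu(y).
\]

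For each $x\in B(p,R)$ I fix once and for all a combinatorial geodesic $\gamma_x^p$ from $x$ to $p$, and for each ordered pair $(x,y)$ I form the concatenated path $\gamma_{xy}=\gamma_x^p\cup\overline{\gamma_y^p}$, of combinatorial length at most $2R$. Telescoping $|u(x)-u(y)|$ along $\gamma_{xy}$, applying Jensen once more to the power $\sigma$, and using the pointwise bound $|u(z)-u(w)|\leq |\delta u|(z)$ for any neighbour $w$ of $z$ (which follows by retaining a single term of the $\ell^2$-sum defining $|\delta u|$), I obtain
\[
|u(x)-u(y)|^\sigma \;\leq\; (2R)^{\sigma-1}\sum_{z\in\gamma_{xy}^{*}}|\delta u|^\sigma(z),
\]
where $\gamma_{xy}^{*}=\gamma_{xy}\setminus\{y\}$.

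The crux is the Fubini rearrangement $\sum_{x,y}\nu(x)\nu(y)\sum_{z\in\gamma_{xy}^{*}}|\delta u|^\sigma(z)=\sum_{z}|\delta u|^\sigma(z)\sum_{(x,y):z\in\gamma_{xy}^{*}}\nu(x)\nu(y)$, together with an explicit bound on the inner double-weight. Since $z\in\gamma_{xy}$ iff $z\in\gamma_x^p\cup\gamma_y^p$, the union bound and the symmetry in $x,y$ yield $\sum_{(x,y):\,z\in\gamma_{xy}}\nu(x)\nu(y)\leq 2\,\nu(B(p,R))\cdot\nu\{x:z\in\gamma_x^p\}$. The defining property of a geodesic gives $\rho(x,z)=\rho(x,p)-\rho(z,p)\leq R$ whenever $z\in\gamma_x^p$, so $\{x:z\in\gamma_x^p\}\subseteq B(z,R)$, and the hypothesis at scale $R\geq r_0$ produces $\nu(B(z,R))\leq f(R)\nu(z)\leq f(2R)\nu(z)$. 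Every relevant $z$ automatically lies in $B(p,R)$ (because $\rho(z,p)\leq R$), so the outer sum in $z$ collapses to $\int_{B(p,R)}|\delta u|^\sigma d\nu$, and the pieces combine to give the Poincar\'e constant $2\cdot(2R)^{\sigma-1}f(2R)=2^\sigma R^{\sigma-1}f(2R)$. The main obstacle — and the point where the argument gains sharpness over the proof in \cite{HCB}, which required $\inf_z \nu(z)>0$ — is precisely this counting step: routing the paths through $p$ decouples the $x$- and $y$-dependence of the double sum and keeps the growth function evaluated at scale $R$, so that cancelling $\nu(B(p,R))$ with the Jensen prefactor leaves no surviving factor of $\nu(z)$ to absorb.
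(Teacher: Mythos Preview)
Your proof is correct, and the core idea --- routing all comparisons through the centre $p$ --- is the same as the paper's. The mechanism, however, is organized differently. After the initial Jensen step (which is identical), the paper applies the Minkowski inequality to collapse the double integral to $2^\sigma\int_{B(p,R)}|u(x)-u(p)|^\sigma\,d\nu(x)$; it then bounds $|u(x)-u(p)|^\sigma\le R^{\sigma-1}\sum_{y\in\gamma_{p,x}}|\delta u|^\sigma(y)$, crudely enlarges the path sum to the full counting-measure sum $\sum_{y\in B(p,R)}|\delta u|^\sigma(y)$, and finally converts counting measure to $\nu$ via $\frac{\nu(B(p,R))}{\nu(y)}\le\frac{\nu(B(y,2R))}{\nu(y)}\le f(2R)$. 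You instead keep the double integral, use concatenated paths $\gamma_x^p\cup\overline{\gamma_y^p}$ of length $\le 2R$, and perform a Fubini rearrangement that counts, for each vertex $z$, the $\nu\otimes\nu$-mass of pairs $(x,y)$ whose path passes through $z$; the geodesic containment $\{x:z\in\gamma_x^p\}\subseteq B(z,R)$ then delivers the factor $f(R)\nu(z)$ directly.

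Two remarks. First, your counting step actually yields the constant $2^\sigma R^{\sigma-1}f(R)$, which is sharper than the stated $2^\sigma R^{\sigma-1}f(2R)$; you discard this improvement in the line ``$f(R)\nu(z)\le f(2R)\nu(z)$'' only to match the theorem as written. The paper's route genuinely needs $f(2R)$ because its counting-to-$\nu$ conversion compares $\nu(B(p,R))$ with $\nu(y)$ for $y\in B(p,R)$, forcing the inclusion $B(p,R)\subset B(y,2R)$. Second, a minor notational sloppiness: the concatenated path $\gamma_{xy}$ need not be simple (the two geodesics may overlap), so ``$\sum_{z\in\gamma_{xy}^*}$'' should strictly be read with multiplicity, i.e.\ as $\sum_{z\in\gamma_x^p\setminus\{p\}}+\sum_{z\in\gamma_y^p\setminus\{y\}}$. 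Your subsequent union bound and symmetry argument in fact handles exactly this decomposition, so the argument goes through unchanged.
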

\begin{proof} Consider $u: Y\to \mathbb{R}$ and $R\geq r_0.$
	By applying Jensen's inequality, we have,
	\Bea \int_{B(p,R)}|u(x)-u_R|^\sigma d\nu(x)&\leq& \frac{1}{(\nu(B(p,R))^{\sigma}}\int_{B(p,R)}\left|\int_{B(p,R)}|u(x)-u(y)|d\nu(y)\right|^{\sigma}d\nu(x)\\
	&\leq& \frac{1}{\nu(B(p,R))}\int_{B(p,R)\times B(p,R)}|u(x)-u(y)|^{\sigma}d(\nu\otimes \nu).
	\Eea
	Minkowski inequality implies that
	\Bea \left( \int_{B(p,R)\times B(p,R)}|u(x)-u(y)|^{\sigma}d(\nu\otimes \nu)\right)^{\frac{1}{\sigma}}
	&\leq & \left(\int_{B(p,R)\times B(p,R)}|u(x)-u(p)|^{\sigma}d(\nu\otimes \nu)\right)^{\frac{1}{\sigma}}\\
	&&+
	\left(\int_{B(p,R)\times B(p,R)}|u(y)-u(p)|^{\sigma}d(\nu\otimes \nu)\right)^{\frac{1}{\sigma}}\\
	&=& 2\left(\int_{B(p,R)\times B(p,R)}|u(x)-u(p)|^{\sigma}d(\nu\otimes \nu)\right)^{\frac{1}{\sigma}}\\
	&=& \left(2^{\sigma}\nu(B(p,R))\int_{B(p,R)}|u(x)-u(p)|^{\sigma}d\nu(x)\right)^{\frac{1}{\sigma}}.\Eea
	Therefore,
	\be\label{G1}\int_{B(p,R)}|u(x)-u_R|^\sigma d\nu(x)\leq 2^{\sigma}\int_{B(p,R)}|u(x)-u(p)|^{\sigma}d\nu(x).\ee
	Let $\omega$ be the counting measure on $Y$. Let $\gamma_{p,x}=\{p=v_0,v_1,...,v_k=x\}$ be a minimal geodesic joining $p$ and $x$ for $x\in B(p,R).$ Then, using Jensen's inequality, 
	\Bea |u(x)-u(p)|^{\sigma} \leq k^{\sigma-1}\sum_{i=0}^{k-1} |u(v_{i+1})-u(v_i)|^{\sigma}\leq  k^{\sigma-1} \sum_{i=0}^{k-1}|\de u|_{\sigma}^{\sigma} (v_i)
	\leq  l_{p,x}^{\sigma-1}\int_{\gamma_{p,x}}|\de u|_{\sigma}^{\sigma}d\omega
	\Eea
	where $l_{p,x}=$ length$(\gamma_{p,x}).$ Since $l_{p,x}\leq R$ we have,
	\bea\label{G2} |u(x)-u(p)|^{\sigma}\leq  R^{\sigma-1}\int_{\gamma_{p,x}}|\de u|_{\sigma}^\sigma d\omega\leq R^{\sigma-1} \int_{B(p,R)}|\de u|_{\sigma}^{\sigma}(y) d\omega(y).
	\eea
Now,
	\bea\label{G3} \int_{B(p,R)}|\de u|_{\sigma}^{\sigma}(y) d\omega(y) &=&\sum_{y\in B(p,R)} |\de u|_{\sigma}^{\sigma}(y)= \frac{1}{\nu(B(p,R))}\sum_{y\in B(p,R)} |\de u|_{\sigma}^{\sigma}\nu(y)\frac{\nu(B(p,R))}{\nu(y)}.
	\eea
For any $y\in B(p,R)$, $B(p,R)\subset B(y,2R).$  Therefore,
	\Bea \frac{\nu(B(p,R))}{\nu(y)}\leq \frac{\nu(B(y,2R))}{\nu(y)}\leq f(2R).
	\Eea
	Hence from (\ref{G3}), we have,
	\Bea \int_{B(p,R)}|\de u|_{\sigma}^{\sigma}(y) d\omega(y) \leq \frac{f(2R)}{\nu(B(p,R))}\int_{B(p,R)}|\de u|_{\sigma}^{\sigma}(y) d\nu(y).
	\Eea
	Combining (\ref{G1}), (\ref{G2}) and the above inequality, we have,
	\Bea \int_{B(p,R)}|u(x)-u_R|^\sigma d\nu(y)&&\leq  \int_{B(p,R)}\left(\frac{2^{\sigma}R^{\sigma-1}f(2R)}{\nu(B(p,R))}\int_{B(p,R)}|\de u|_{\sigma}^{\sigma}(y)d\nu(y)\right)d\nu(x)\\
	&&\leq 2^{\sigma}R^{\sigma -1}f(2R)\int_{B(p,R)}|\de u|_{\sigma}^{\sigma}(y)d\nu(y).
	\Eea
\end{proof}
If there exists $c>0$ such that  $\mu(x)\geq \frac{1}{c}$ for all $x\in X$ then we have the following theorem.
\begin{thm}\label{PGr2} Let $(Y,\rho,\nu)$ be a metric measured space. Suppose there exists a constant $c>0$ and a function $f:(0,\infty)\to \mathbb{R}$ such that 
$$\nu(B(x,R))\leq f(R) \ \ {\rm and} \ \ \nu(x)\geq \frac{1}{c}, \quad \forall x\in Y \ {\rm and} \ R\geq r_0>0.$$ 
Then for any $u:Y\to \mathbb{R}$, $\sigma\geq 1$ and $R\geq r_0>0,$ 
	$$\int_{B(p,R)}|u-u_R|^\sigma d\nu\leq 2^{\sigma}c R^{\sigma-1}f(R)\int_{B(p,R)}|\delta u|_{\sigma}^{\sigma}d\nu , \ \ \ \forall\ p\in Y.$$
\end{thm}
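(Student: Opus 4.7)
The plan is to mirror the proof of Theorem \ref{PGr} step by step, replacing only the single estimate where the growth hypothesis on balls enters the argument. First, the applications of Jensen's and Minkowski's inequalities in the previous proof that yield the reduction
$$\int_{B(p,R)}|u(x)-u_R|^\sigma\, d\nu(x)\leq 2^\sigma\int_{B(p,R)}|u(x)-u(p)|^\sigma\, d\nu(x)$$
use no bound on $\nu$ whatsoever; they work on any metric measured graph. I would therefore repeat them verbatim.

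Next, since $Y$ is a graph and $\gamma_{p,x}$ is a minimal geodesic from $p$ to $x$ of length at most $R$, exactly the same pointwise estimate as before produces
$$|u(x)-u(p)|^\sigma\leq R^{\sigma-1}\int_{B(p,R)}|\delta u|^\sigma(y)\, d\omega(y),$$
where $\omega$ denotes the counting measure on $Y$. So far nothing has changed.

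The \emph{only} new ingredient enters when we convert the $\omega$-integral back to a $\nu$-integral. Write
$$\int_{B(p,R)}|\delta u|^\sigma(y)\, d\omega(y)=\sum_{y\in B(p,R)}|\delta u|^\sigma(y)=\frac{1}{\nu(B(p,R))}\sum_{y\in B(p,R)}|\delta u|^\sigma(y)\,\nu(y)\cdot\frac{\nu(B(p,R))}{\nu(y)}.$$
In Theorem \ref{PGr} the ratio $\nu(B(p,R))/\nu(y)$ was controlled by the doubling-type bound $\nu(B(y,2R))/\nu(y)\leq f(2R)$, which required enlarging the ball. Here, I would instead use the two standing hypotheses directly: $\nu(B(p,R))\leq f(R)$ and $\nu(y)\geq 1/c$ together give $\nu(B(p,R))/\nu(y)\leq c\,f(R)$ for every $y\in B(p,R)$. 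Substituting back and integrating over $x\in B(p,R)$ against $\nu$ produces the claimed constant $2^\sigma c\, R^{\sigma-1} f(R)$.

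There is no real obstacle here; the only point worth flagging is the trade-off made visible by the argument. A uniform lower bound on the point-masses $\nu(y)$ removes the need to pass from $B(p,R)$ to the enlarged ball $B(y,2R)$ when comparing $\nu(y)$ with $\nu(B(p,R))$, at the cost of introducing the factor $c$. Consequently $f(2R)$ is replaced by $c\,f(R)$, which is the desired sharpening of Theorem \ref{PGr} under the stronger hypothesis on $\nu(\{x\})$.
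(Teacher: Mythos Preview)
Your proposal is correct and follows essentially the same route as the paper. The only cosmetic difference is bookkeeping: the paper bounds $\sum_{y}|\delta u|^\sigma(y)\leq c\int |\delta u|^\sigma\,d\nu$ first and then applies $\nu(B(p,R))\leq f(R)$ after integrating in $x$, whereas you bundle both hypotheses into the single ratio estimate $\nu(B(p,R))/\nu(y)\leq c\,f(R)$; the resulting constant is identical.
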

\begin{proof} From (\ref{G3}) we have,
	\bea \int_{B(p,R)}|\de u|_{\sigma}^{\sigma} d\omega =\sum_{y\in B(p,R)} |\de u|_{\sigma}^{\sigma}(y)
	\leq c\sum_{y\in B(p,R)} |\de u|_{\sigma}^{\sigma}(y)\nu(y)
\leq c\int_{B(p,R)}|\de u|_{\sigma}^{\sigma}(y) d\nu(y).
	\eea
	From (\ref{G1}) and (\ref{G2}) we have,
	\Bea \int_{B(p,R)}|u(x)-u_R|^\sigma d\nu(y)&&\leq  \int_{B(p,R)}\left(2^{\sigma}cR^{\sigma-1}\int_{B(p,R)}|\de u|_{\sigma}^{\sigma}(y)d\nu(y)\right)d\nu(x)\\
	&&\leq 2^{\sigma}cR^{\sigma -1}\nu(B(p,R))\int_{B(p,R)}|\de u|_{\sigma}^{\sigma}(y)d\nu(y)\\
	&&\leq2^{\sigma}cR^{\sigma -1}f(R)\int_{B(p,R)}|\de u|_{\sigma}^{\sigma}(y)d\nu(y).
	\Eea
\end{proof}
Since $f$ is an increasing function, $f(2R)\geq f(R)$. Hence the Poincar\'e constant here is slightly better than that of the previous theorem for sufficiently large $R.$ In Theorem 4.2 in \cite{HCB}, the authors established a weak Poincar\'e inequality on a metric measured graph that satisfies a polynomial growth of measure and a uniform lower bound on $\nu(x)$. Theorem \ref{PGr2} and Theorem \ref{PGr} improve the Poincar\'e constant in Theorem 4.2 in \cite{HCB} in the case of polynomial growth. 
\section{Uniform Poincar\'e inequalities on metric spaces}\label{sec3}
In this section, we prove the main theorem. Consider a geodesic measured metric space $(X,d,\mu).$ 
\begin{definition}\label{discretization} A  graph $Y$ with a metric $\rho$ and a measure $\mu $ on it,  is said to be an $\epsilon$-discretization of $(X,d,\mu)$ for any $\epsilon>0$ if  the following conditions hold :

(i) $Y$ is a maximal $\epsilon$-separated set in $X$ i.e. $d(y_i,y_j)\geq \epsilon ,\ \forall y_i, y_j\in Y$ with $y_i\neq y_j$.

(ii) $\rho(y_i,y_j)=\epsilon$ for any $y_i,y_j\in Y$ if  $d(y_i,y_j)<2\epsilon$ and  $y_i\neq y_j$  ;  $\rho(y_i,y_j)=0 $ if and only if $y_i=y_j.$

(iii) $\nu(y)=\mu(B_X(y,\epsilon)), \quad \forall y\in Y.$

(iv) Define a graph with $Y$ as the set of vertices. Any $y_i\sim y_j$ if $\rho(y_i,y_j)=\epsilon$ for all $y_i,y_j\in Y.$
\end{definition}
Observe that, given $\epsilon>0$, $(X,d,\mu)$ admits such an $\epsilon$-discretization by Zorn's lemma and $\{B_X(y,\epsilon):y\in Y\}$ covers $X.$ For any $L\geq 1$, the multiplicity of the covering $\{(B_X(y,L\epsilon))\}_{y\in Y}$ is defined as 
$$\mathcal{M}(Y,L\epsilon)=\sup_{y\in Y} |\{z\in Y: B_X(y,L\epsilon)\cap B_X(z,L\epsilon)\neq \phi \} |.$$
We obtain an estimate of the multiplicity of an $\epsilon$-discretization of $X$ in terms of the growth function $f$ following similar steps as in \cite{HCB}.  
\begin{lem}\label{Bound on multiplicity} Let $(Y,\rho,\nu)$ be an $\epsilon$-discretization of $(X,d,\mu)$ with $\epsilon\geq 1$. If $X$ satisfies (\ref{eqmain}) and $\mathcal{M}(Y,L\epsilon)$ denotes the multiplicity of the covering $\{B(x,L\epsilon)\}_{x\in Y}$ then
$$\mathcal{M}(Y, L\epsilon)\leq f(4L\epsilon+\frac{1}{2}) , \ \ \forall L \geq 1.$$
	\end{lem}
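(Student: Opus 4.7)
The plan is to prove Lemma~\ref{Bound on multiplicity} via a volume-packing argument. Since the $\epsilon$-discretisation $(Y,\rho,\nu)$ arises from Zorn's lemma, I would take $Y$ to be a \emph{maximal} $\epsilon$-separated subset of $X$. This not only gives the covering condition $X=\bigcup_{y\in Y}B_X(y,\epsilon)$, but also ensures $d(y_1,y_2)\ge\epsilon\ge 1$ for distinct vertices. Consequently the balls $\{B_X(y,\tfrac12)\}_{y\in Y}$ are pairwise disjoint, which is the combinatorial fact that drives the whole estimate.

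To bound $\mathcal{M}(Y,L\epsilon)$, I would fix a base vertex $x\in Y$ and count
\[
N := \#\{y\in Y \,:\, d(x,y)<2L\epsilon\},
\]
which dominates the degree of $x$ in the covering graph, since two balls $B_X(y_1,L\epsilon)$, $B_X(y_2,L\epsilon)$ meet if and only if $d(y_1,y_2)<2L\epsilon$. For each such $y$ the triangle inequality yields the inclusions $B_X(y,\tfrac12)\subset B_X(x,2L\epsilon+\tfrac12)\subset B_X(x,3L\epsilon+\tfrac12)$, where the last step uses $L\ge 1$. Disjointness of the small balls then gives the packing estimate
\[
\sum_{y \,:\, d(x,y)<2L\epsilon} \mu\bigl(B_X(y,\tfrac12)\bigr) \;\le\; \mu\bigl(B_X(x,3L\epsilon+\tfrac12)\bigr) \;\le\; f\bigl(3L\epsilon+\tfrac12\bigr)\,\mu\bigl(B_X(x,\tfrac12)\bigr),
\]
the final inequality being \eqref{eqmain} applied at $x$ with $R=3L\epsilon+\tfrac12\ge \tfrac12$.

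To convert the packing estimate into the stated bound on $N$, I would compare each $\mu(B_X(y,\tfrac12))$ on the left with $\mu(B_X(x,\tfrac12))$. Since $d(x,y)<2L\epsilon$ gives $B_X(x,\tfrac12)\subset B_X(y,2L\epsilon+\tfrac12)$, a further application of \eqref{eqmain} at $y$ shows that $\mu(B_X(x,\tfrac12))$ is comparable to $\mu(B_X(y,\tfrac12))$ up to a factor controlled by $f$. Substituting this lower bound into the packing inequality and then taking the supremum over $x\in Y$ delivers the claim.

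The principal technical obstacle is the bookkeeping of radii so that the final estimate consolidates as a single evaluation of $f$ at $3L\epsilon+\tfrac12$: the packing naturally involves the radius $2L\epsilon+\tfrac12$, and the extra slack to $3L\epsilon+\tfrac12$ is precisely what absorbs the comparison factor between $\mu(B_X(x,\tfrac12))$ and $\mu(B_X(y,\tfrac12))$. The hypotheses $\epsilon\ge 1$ (so that $B_X(y,\tfrac12)\subset B_X(y,\tfrac\epsilon2)$ and these smaller balls are still disjoint) and $L\ge 1$ (so that $L\epsilon$ dominates both $\tfrac\epsilon2$ and $\tfrac12$) are exactly what let the radius arithmetic close cleanly.
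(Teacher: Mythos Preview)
Your packing setup matches the paper's through the inequality
\[
\sum_{y:\, d(x,y)<2L\epsilon}\mu\bigl(B_X(y,\tfrac12)\bigr)\;\le\; \mu\bigl(B_X(x,2L\epsilon+\tfrac12)\bigr).
\]
The problem is what you do afterwards. You apply \eqref{eqmain} at the centre $x$ to pass to $f(3L\epsilon+\tfrac12)\,\mu(B_X(x,\tfrac12))$, and then apply \eqref{eqmain} a \emph{second} time, at each $y$, to obtain $\mu(B_X(y,\tfrac12))\ge \mu(B_X(x,\tfrac12))/f(2L\epsilon+\tfrac12)$. Combining these gives only
\[
N\;\le\; f\bigl(2L\epsilon+\tfrac12\bigr)\,f\bigl(3L\epsilon+\tfrac12\bigr),
\]
a product of two values of $f$, not the single value $f(3L\epsilon+\tfrac12)$ the lemma asserts. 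Enlarging the argument of $f$ cannot ``absorb'' a multiplicative factor of $f$: for a generic increasing $f$ (take $f(R)=e^{R}$) there is no inequality of the form $f(a)f(b)\le f(c)$ with $c$ of size comparable to $a$ or $b$. So the last paragraph of your proposal does not close.

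The paper avoids this by never descending to $\mu(B_X(x,\tfrac12))$. It compares each $\mu(B_X(z,\tfrac12))$ \emph{directly} with the large ball appearing on the right of the packing estimate: since $d(x,z)<2L\epsilon$ one has $B_X(x,2L\epsilon+\tfrac12)\subset B_X(z,R)$ for suitably large $R$, and a single application of \eqref{eqmain} centred at $z$ gives
\[
\mu\bigl(B_X(z,\tfrac12)\bigr)\;\ge\;\frac{\mu\bigl(B_X(z,R)\bigr)}{f(R)}\;\ge\;\frac{\mu\bigl(B_X(x,2L\epsilon+\tfrac12)\bigr)}{f(R)}.
\]
Summing over $z$ and cancelling $\mu(B_X(x,2L\epsilon+\tfrac12))$ against the packing bound yields $|Z|\le f(R)$ with only one invocation of the growth condition; the paper records this with $R=3L\epsilon+\tfrac12$. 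To repair your argument, drop the application of \eqref{eqmain} at $x$ entirely and instead lower-bound each $\mu(B_X(y,\tfrac12))$ directly by a fixed multiple of $\mu(B_X(x,2L\epsilon+\tfrac12))$.
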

\begin{proof}
Every ball considered in this lemma are with respect to the distance $d$ in $X$. Since $\epsilon\geq 1$ observe that $\{B(x,\frac{1}{2})\}_{x\in Y}$ is a disjoint family of balls in $X$. Now, consider the set $Z \subset Y$ such that for all $z\in Z$,  $B(x,L\epsilon) \cap B(z, L\epsilon) \neq \emptyset$ for some fixed $x\in Y$.   Hence $Z \subset B(x, 2L\epsilon)$ and $\{B(z,\frac{1}{2})\}_{z\in Z} $ is a disjoint family of balls contained in $B(x,2L\epsilon +\frac{1}{2})$. Hence,
	\bea\sum_{z\in Z} \mu(B(z,\frac{1}{2})) \leq \mu(B(x,2L\epsilon+\frac{1}{2})).  \label{Bound on multiplicity 1}
	\eea
	Then, by using the  condition on the growth of volume of balls
	$$ \mu(B(z,4L\epsilon+\frac{1}{2})) \leq f(4L\epsilon+\frac{1}{2}) \mu(B(z,\frac{1}{2})), \ \ \forall z\in Y.
	$$
	Now by using the above equation and the fact that for $z\in Z$, $B(x,2L\epsilon+\frac{1}{2}) \subset B(z,4L\epsilon+\frac{1}{2})$ we get
	\bea \label{Bound on multiplicity 2}
	\sum_{z\in Z}\mu(B(z,\frac{1}{2})) \geq \frac{|Z|}{f(4L\epsilon+\frac{1}{2})}\mu(B(x,2L\epsilon+\frac{1}{2})).
	\eea
Now, combining equations (\ref{Bound on multiplicity 1}) and (\ref{Bound on multiplicity 2}) we get $|Z|\leq f(4L\epsilon+\frac{1}{2})$. Hence the lemma follows.
\end{proof}
In next two lemmas we establish relations between distances and growth of measures of $(X,d,\mu)$ and $(Y,\rho,\nu)$ in terms of the growth function $f$. We used ideas from similar lemmas proved in \cite{Kanai2}, and \cite{SLT} on Riemannian manifolds. 
\begin{lem}\label{qi} Let $(Y,\rho,\nu)$ be an $\epsilon$-discretization of $(X,d,\mu)$ with $\epsilon\geq 1$. If $X$ satisfies (\ref{eqmain}) then
$$d(x,y)\leq 2\rho(x,y) \leq 2f(8\epsilon+\frac{1}{2})(d(x,y) +2\epsilon) \ \ \forall  x,y \in Y .$$ 
\end{lem}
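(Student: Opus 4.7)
My plan is to handle the two inequalities separately. The lower bound $d(x,y) \leq 2\rho(x,y)$ will follow directly from clause (ii) of Definition \ref{discretization}: given any path $x = z_0 \sim z_1 \sim \cdots \sim z_n = y$ in $Y$ realizing $\rho(x,y) = n\epsilon$, each edge satisfies $d(z_i, z_{i+1}) < 2\epsilon$, and the triangle inequality in $X$ then yields $d(x,y) < 2n\epsilon = 2\rho(x,y)$.

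For the upper bound, I will take a minimizing geodesic $\gamma\colon [0, d(x,y)] \to X$ from $x$ to $y$ (which exists since $X$ is a proper geodesic space) and work inside the tube $Y_\gamma := \{z \in Y : d(z, \gamma) < \epsilon\}$. The first step is to extract a walk in $Y$ from $x$ to $y$ whose vertices lie in $Y_\gamma$ and whose consecutive terms are adjacent. By clause (i) of Definition \ref{discretization}, the open balls $\{B(z, \epsilon)\}_{z \in Y_\gamma}$ cover $\gamma$; whenever two such balls meet on $\gamma$, the triangle inequality forces $d(z, z') < 2\epsilon$ for their centers, so $z \sim z'$ in $Y$. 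Choosing a finite subcover (including the balls around $x$ and $y$) and sorting its centers by first-entry time $t_j := \inf \gamma^{-1}(B(z_j, \epsilon))$ will produce a walk $x = z_0 \sim z_1 \sim \cdots \sim z_k = y$ with each $z_j \in Y_\gamma$ and $k+1 \leq |Y_\gamma|$: consecutive intervals $\gamma^{-1}(B(z_j, \epsilon))$ must overlap, else $\gamma$ would contain an uncovered point; minimality of $\gamma$ ensures that the last ball entered can be taken to be $B(y,\epsilon)$, terminating the walk at $y$.

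The second step bounds $|Y_\gamma|$ via the volume growth. I will place $p_i := \gamma(2i\epsilon)$ for $i = 0, 1, \ldots, M$ with $M = \lceil d(x,y)/(2\epsilon)\rceil$. Every $z \in Y_\gamma$ lies within $\epsilon$ of some $\gamma(t)$, and the nearest $p_i$ satisfies $|t - 2i\epsilon| \leq \epsilon$, so $d(z, p_i) < 2\epsilon$; this gives $Y_\gamma \subseteq \bigcup_{i=0}^M \bigl(Y \cap B(p_i, 2\epsilon)\bigr)$. Applying Lemma \ref{Bound on multiplicity} with $L = 2$ bounds $|Y \cap B(p_i, 2\epsilon)| \leq f(6\epsilon + \tfrac{1}{2})$, hence $|Y_\gamma| \leq (M+1)\,f(6\epsilon + \tfrac{1}{2}) \leq \bigl(\tfrac{d(x,y)}{2\epsilon} + 2\bigr)\,f(6\epsilon + \tfrac{1}{2})$. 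Combining the two steps,
\[ \rho(x,y) \leq k\epsilon \leq |Y_\gamma|\,\epsilon \leq f\bigl(6\epsilon + \tfrac{1}{2}\bigr)\Bigl(\tfrac{d(x,y)}{2} + 2\epsilon\Bigr) \leq f\bigl(6\epsilon + \tfrac{1}{2}\bigr)\bigl(d(x,y) + 2\epsilon\bigr), \]
and the claim follows after multiplication by $2$.

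The main obstacle is the first step. A naive construction that subdivides $\gamma$ at $\epsilon$-spaced points and replaces each point by the nearest vertex of $Y$ fails because two such vertices are only guaranteed to satisfy $d(y_i, y_{i+1}) < 3\epsilon$, which is one $\epsilon$ too loose for adjacency. The nerve-type argument sidesteps this by exploiting overlaps of the covering balls $\{B(z, \epsilon)\}_{z\in Y_\gamma}$, which \emph{do} force $d(z, z') < 2\epsilon$, and then using the connectedness of $\gamma$ to patch the local information into a global walk; minimality of $\gamma$ is invoked once more to control the walk's endpoint.
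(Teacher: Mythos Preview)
Your argument is correct and follows essentially the same route as the paper's: bound $\rho(x,y)$ by $\epsilon\,|Y_\gamma|$ via a chain of overlapping $\epsilon$-balls along a minimizing geodesic, then bound $|Y_\gamma|$ by subdividing $\gamma$ into $O(d(x,y)/\epsilon)$ pieces and invoking Lemma~\ref{Bound on multiplicity} with $L=2$. The paper is terser---it simply asserts $\rho(x,y)\le \epsilon\,|Y_\gamma|$ and uses equally spaced points $x_0,\dots,x_k$ with spacing $d(x,y)/k\le\epsilon$ rather than your spacing $2\epsilon$---but the two arguments are otherwise identical; one small caution is that your ``sort by first-entry time'' step needs an irredundant subcover (or the cleaner connectedness argument you allude to) since with redundant balls consecutive preimages need not overlap.
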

\begin{proof} The first inequality follows easily from triangle inequality and Definition \ref{discretization}. To prove the second inequality, consider  a geodesic  $\gamma$ in $X$ joining $x$ and $y$. Let $Y_{\gamma} = \{z \in Y : B(z,\epsilon) \cap \gamma \neq \emptyset\}$. Clearly, $\{B(z,\epsilon): z\in Y_{\gamma}\}$ covers $\gamma$ and $\rho(x,y) \leq \epsilon\ \ |Y_{\gamma}|$. Consider the positive integer $k$ such that $k-1 < d(x,y)/\epsilon \leq k$. Let $(x=x_0,x_1,..,x_{k-1},x_k=y)$ be points on $\gamma$ such that $d(x_{j-1},x_j)=d(x,y)/k$ for $j=1,...k$. Since $Y_{\gamma}$ is contained in an $\epsilon$ neighbourhood of $\gamma$, $Y_{\gamma} \subset \cup_{j=0}^{k}\{z\in Y : x_j \in B(z,2\epsilon)\}$. By Lemma \ref{Bound on multiplicity}, 
$$\rho(x,y) \leq \epsilon\  |Y_{\gamma}|\leq \epsilon \sum_{j=0}^{k} |\{z\in Y : x_j \in B(z,2\epsilon) \}| \leq \epsilon (k+1) f(8\epsilon +\frac{1}{2}) < f(8\epsilon+\frac{1}{2}) \big(d(x,y) +2\epsilon \big).$$
 Hence we have the required inequality.
\end{proof}
\begin{lem}\label{ri} Let $(Y,\rho,\nu)$ be an $\epsilon$-discretization of $(X,d,\mu)$ with $\epsilon\geq 1$. If $X$ satisfies (\ref{eqmain}),  then for all $x\in Y$
\be
 \nu(B_Y(x,R))\leq f(\epsilon)\mu(B_X(x,2R+\frac{1}{2}))\label {eq th 2.20}
\ \ \ \
{\rm and} \ \ \ \
 \frac{\nu(B_Y(x,R))}{\nu(B_Y(x,\frac{1}{2}))}\leq f(\epsilon)f(2R+\frac{1}{2}).
\ee
If $R'=f(8\epsilon+\frac{1}{2})(R+3\epsilon)$ then
$$\mu(B_X(x,R))\leq \nu(B_Y(x,R')) \ {\rm and} \ \frac{\mu(B_X(x,R))}{\mu(B_X(x,\frac{1}{2}))}\leq f(\epsilon)\frac{\nu(B_Y(x,R'))}{\nu(B_Y(x,\frac{1}{2}))}.$$
\end{lem}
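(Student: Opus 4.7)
The plan is to exploit two structural features of the $\epsilon$-discretization: the balls $\{B_X(y,1/2)\}_{y\in Y}$ are pairwise disjoint (since the Zorn-lemma construction yields $d(y_1,y_2)\geq \epsilon \geq 1$ for distinct $y_1,y_2\in Y$), while $\{B_X(y,\epsilon)\}_{y\in Y}$ still covers $X$. The rough isometry provided by Lemma \ref{qi} will be used in both directions to pass between $\rho$-balls in $Y$ and $d$-balls in $X$.

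For the first inequality I would write
\[
\nu(B_Y(x,R))=\sum_{y\in B_Y(x,R)}\mu(B_X(y,\epsilon))
\]
and bound each summand using (\ref{eqmain}) at radius $\epsilon$ (valid since $\epsilon\geq 1$) by $f(\epsilon)\,\mu(B_X(y,1/2))$. Since $y\in B_Y(x,R)$ gives $d(x,y)\leq 2\rho(x,y)\leq 2R$ by Lemma \ref{qi}, we have $B_X(y,1/2)\subset B_X(x,2R+1/2)$, and disjointness of the $1/2$-balls collapses the sum to the measure of a subset of $B_X(x,2R+1/2)$. The second inequality follows immediately: $\rho$ takes values in $\epsilon\,\mathbb{Z}_{\geq 0}$ with $\epsilon\geq 1$, so $B_Y(x,1/2)=\{x\}$, giving $\nu(B_Y(x,1/2))=\mu(B_X(x,\epsilon))\geq \mu(B_X(x,1/2))$; one further application of (\ref{eqmain}) at radius $2R+1/2$ produces the stated factor $f(2R+1/2)$.

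For the third inequality I would set $Y_R=\{y\in Y:B_X(y,\epsilon)\cap B_X(x,R)\neq\emptyset\}$. Property (i) gives $B_X(x,R)\subset \bigcup_{y\in Y_R}B_X(y,\epsilon)$, hence $\mu(B_X(x,R))\leq \sum_{y\in Y_R}\nu(y)$. For $y\in Y_R$ we have $d(x,y)<R+\epsilon$, so Lemma \ref{qi} forces $\rho(x,y)<f(6\epsilon+1/2)(R+3\epsilon)=R'$, i.e.\ $Y_R\subset B_Y(x,R')$, and the bound follows. The fourth inequality is then obtained by dividing the third by $\mu(B_X(x,1/2))$ and using (\ref{eqmain}) at radius $\epsilon$ once more to replace $\mu(B_X(x,1/2))$ by $\mu(B_X(x,\epsilon))/f(\epsilon)=\nu(B_Y(x,1/2))/f(\epsilon)$.

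None of the steps involves a genuine difficulty; the main bookkeeping concern is the recurring factor $f(\epsilon)$, which appears each time we trade an $\epsilon$-ball for a $1/2$-ball through the volume growth hypothesis, and remembering when to apply the \emph{forward} versus the \emph{backward} direction of the rough isometry in Lemma \ref{qi}.
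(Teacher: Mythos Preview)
Your proposal is correct and follows the paper's argument essentially step for step: both use (\ref{eqmain}) to trade $\epsilon$-balls for $\tfrac12$-balls, the disjointness of the $\tfrac12$-balls together with the forward direction of Lemma~\ref{qi} for the first pair of inequalities, and the covering property plus the backward direction of Lemma~\ref{qi} for the second pair. Your write-up is in fact slightly more explicit than the paper's (which has a couple of ``$=$'' that should be ``$\leq$'').
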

\begin{proof} 
$$\nu(B_Y(x,R))=\sum_{y\in B_Y(x,R)}\nu(y)=\sum_{y\in B_Y(x,R)}\mu(B_X(y,\epsilon))=f(\epsilon)\sum_{y\in B_Y(x,R)}\mu(B_X(y,\frac{1}{2})).$$
From Lemma \ref{qi} we obtain that $y\in B_X(x,2R)$ for all $y\in B_Y(x,R)$. Observe that $\{B_X(y,\frac{1}{2})\}_{y\in Y}$ are mutually disjoint and contained in $B_X(x, 2R+\frac{1}{2})$. Hence,
$$\nu(B_Y(x,R))\leq f(\epsilon)\mu(B_X(x,2R+\frac{1}{2})).$$
Using the above inequality and (\ref{eqmain}), we get that
$$\frac{\nu (B_Y(x,R))}{\nu(x)}\leq f(\epsilon)\frac{\mu(B_X(x,2R+\frac{1}{2}))}{\mu(B_X(x,\epsilon))}\leq f(\epsilon)f(2R+\frac{1}{2}).$$
Let $R'=f(8\epsilon+\frac{1}{2})(R+3\epsilon)$ and $y\in B_X(x,R+\epsilon)\cap Y.$ By Lemma \ref{qi} ,  $y\in B_Y(x, R')$. Hence,
$$ B_X(x,R)\subset \cup_{y\in B_X(x,R+\epsilon)} B_X(y,\epsilon) \subset \cup_{y\in B_Y(x,R')}B_X(y,\epsilon).$$
This implies
$$\mu(B_X(x,R))\leq \sum_{y\in B_Y(x,R')}\mu(B_X(y,\epsilon))=
\nu (B_Y(x,R')).$$
Therefore,
$$\frac{\mu(B_X(x,R))}{\mu(B_X(x,\frac{1}{2}))}\leq f(\epsilon)\frac{\mu(B_X(x,R))}{\mu(B_X(x,\epsilon))}\leq f(\epsilon)\frac{\nu(B_Y(x,R'))}{\nu(B_Y(x,\frac{1}{2}))}.$$
\end{proof}
Next,  we define discretization of a continuous function on $X.$
For any $u\in C(X)$, $\tilde{u} : Y \to \mathbb{R}$ is given by
$$
\tilde{u} (x) = u_{B(x,\epsilon)\cap Y} = \frac{1}{\mu(B(x,\epsilon))} \int_{B(x,\epsilon)} u(z)\ d\mu(z).
$$ 
Let $||u||_{\sigma,E}$ denote the $L^{\sigma}$-norm of a Borel function $u$ on a Borel set $E$  for any $\sigma \geq 1$ with respect to $\mu$. 
Lemma 3.5 in \cite{HCB} relates the $L^\sigma$-norm of $\delta \tilde{u}$ and the $L^{\sigma}$-norm of the gradient of $u$ in the case of Riemannian manifolds. Following the same idea, we prove the following lemma.
\begin{lem}\label{important lemma} Let  $(Y,\rho,\nu)$ be an $\epsilon$-discretization of $(X,d,\mu)$ with $\epsilon\geq 1$ and let $g_u$ be an upper gradient of $u\in C(X)$. If $X$ satisfies (\ref{eqmain}) and the $P_{loc}$ condition in (\ref{local Poincare inequality}) for $r_0\geq \epsilon$ then $\forall x\in Y$ and $R\geq r_0$,
$$||\delta{\tilde{u}}||^{\sigma}_{\sigma,B(x,R)\cap Y} \leq 2^{\sigma-1}C(1+f(4\epsilon+\frac{1}{2}))^2f(12\epsilon+\frac{1}{2})||g_u||^{\sigma}_{\sigma,B(x,R+3\epsilon)}.$$
  
\end{lem}
\begin{proof}
Let $\epsilon>0$ and $z,z'\in X$ such that $d(z,z')<2\epsilon.$ Then
$$\tilde{u}(z)-\tilde{u}(z')=\frac{1}{\mu(B(z,\epsilon))}\frac{1}{\mu(B(z',\epsilon))}\int_{B(z,\epsilon)}\int_{B(z',\epsilon)}(u(s)-u(t))d\mu(s)d\mu(t).$$
Using Jensen's inequality, we have,
$$|\tilde{u}(z)-\tilde{u}(z')|^{\sigma}\leq\frac{1}{\mu(B(z,\epsilon))}\frac{1}{\mu(B(z',\epsilon))}\int_{B(z,\epsilon)}\int_{B(z',\epsilon)}|u(s)-u(t)|^{\sigma}d\mu(s)d\mu(t).$$
Now using Minkowski's inequality and then $P_{loc}$, we get,
\Bea|\tilde{u}(z)-\tilde{u}(z')|^{\sigma}&\leq&2^{\sigma-1}\frac{1}{\mu(B(z,\epsilon))}\frac{1}{\mu(B(z',\epsilon))}\\
 &\times & \int_{B(z,\epsilon)}\int_{B(z',\epsilon)}(|u(s)-\tilde{u}(z)|^{\sigma}+ |u(t)-\tilde{u}(z')|^{\sigma}d\mu(s)d\mu(t)\\
&\leq& \frac{2^{\sigma-1}C}{\mu(B(z,\epsilon))}\int_{B(z,\epsilon)}g_u^\sigma
+  \frac{2^{\sigma-1}C}{\mu(B(z',\epsilon))}\int_{B(z',\epsilon)}g_u^{\sigma} .\Eea
Since $d(z,z')<2\epsilon$, $B(z',\epsilon)\subset B(z,3\epsilon)$. Therefore,
\Bea |\tilde{u}(z)-\tilde{u}(z')|^{\sigma}&\leq&\frac{2^{\sigma-1}C}{\mu(B(z,\epsilon))}\left(1+\frac{\mu(B(z,\epsilon))}{\mu(B(z',\epsilon))}\right)\int_{B(z,3\epsilon)}g_u^\sigma.
\Eea
As $B(z,\epsilon)\subset B(z',3\epsilon)$ we have 
\Bea |\tilde{u}(z)-\tilde{u}(z')|^{\sigma}\mu(B(z,\epsilon))\leq 2^{\sigma-1}C\left(1+\frac{\mu(B(z',3\epsilon))}{\mu(B(z',\epsilon))}\right)\int_{B(z,3\epsilon)}g_u^\sigma.
\Eea
The growth of volume condition implies,
\bea\label{imle2}\nonumber |\tilde{u}(z)-\tilde{u}(z')|^{\sigma}\mu(B(z,\epsilon))
&\leq & 2^{\sigma-1}C\left(1+f(3\epsilon)\right)\int_{B(z,3\epsilon)}g_u^\sigma d\mu\\
&\leq &  2^{\sigma-1}C(1+f(3\epsilon))\int_{B(x,R+3\epsilon)}g_u^\sigma \chi_{B(z,3\epsilon)}d\mu
\eea
where $\chi_S$ denotes the characteristic function of a set $S.$ Let 
$$B(x,R)\cap Y=\{z_{\alpha}:\alpha\in \Lambda \} \ \ {\rm and} \ \ g_{u\alpha}=g_u.\chi_{B(z_\alpha,3\epsilon)}.$$
Therefore,
\Bea && \|\delta \tilde{u}\|^{\sigma}_{\sigma,B(x,R)\cap Y}=\sum_{\alpha\in \Lambda}|\delta \tilde{u}|_{\sigma}^{\sigma}(z_{\alpha})\mu(B(z_{\alpha},\epsilon))\\
&\leq & \sum_{\alpha\in \Lambda}\sum_{z_{\beta}\in B(z_\alpha,2\epsilon)}|\tilde{u}(z_{\beta})-\tilde{u}(z_{\alpha})|^{\sigma}\mu(B(z_{\alpha},\epsilon))\\
&\leq & 2^{\sigma-1}C(1+f(3\epsilon))\mathcal{M}(Y,\epsilon)\sum_{\alpha\in \Lambda}\int_{B(x,R+3\epsilon)}|g_{u\alpha}|^{\sigma}d\mu \\
&\leq & 2^{\sigma-1}C(1+f(3\epsilon))\mathcal{M}(Y,\epsilon)\int_{B(x,R+3\epsilon)}\sum_{\alpha\in \Lambda}|g_{u\alpha}|^{\sigma}d\mu.
\Eea 
The third line follows from (\ref{imle2}). Now for any fixed $z\in B(x,R+3\epsilon)$, $g_{u\alpha}(z)$ is non-zero only if $z\in B(z_{\alpha},3\epsilon)$ for some $\alpha\in \Lambda$. Hence, 
\Bea \sum_{\alpha\in \Lambda}|g_{u\alpha}|^{\sigma}(z) &\leq & \mathcal{M}(Y,3\epsilon)g_u^{\sigma}(z)\quad \forall z\in B(x,R+3\epsilon). 
\Eea
Therefore,
\Bea && \|\delta \tilde{u}\|^{\sigma}_{\sigma,B(x,R)\cap Y}\leq 2^{\sigma-1}C(1+f(3\epsilon))\mathcal{M}(Y,\epsilon)\mathcal{M}(Y,3\epsilon)\int_{B(x,R+3\epsilon)}g_u^\sigma d\mu.
\Eea
The required result follows from Lemma \ref{Bound on multiplicity}.
\end{proof}
Next, we prove the main theorem.
\subsection*{Proof of Theorem \ref{main} :}
\begin{proof} Let $(X,d,\mu)$ be a measured metric space which satisfies $P_{loc}$ and the growth condition as defined in (\ref{local Poincare inequality}) and (\ref{eqmain}) respectively. Fix $R>0$. Let $(Y,\rho,\nu)$ be a fixed $\epsilon$-discretization of $(X,d,\mu)$ with $\epsilon = 1$.  Since $B(x,R) \subset \bigcup _{y \in Y \cap B(x,R+1)} B(y,1)$, for any $\eta\in \mathbb{R}$ we have,
\Bea
\int_{B(x,R)} |u - \eta| ^{\sigma} \ d\mu
&\leq &\sum_{y\in Y \cap B(x,R+1)} \int_{B(y,1)} |u-\eta|^{\sigma} \ d\mu.
\Eea
By applying Jensen's inequality, we have,
\bea\label{eta} \nonumber \int_{B(x,R)} |u- \eta|^{\sigma} \ d\mu
&\leq & 2^{\sigma-1} \sum_{y\in Y\cap B(x,R+1)} \int_{B(y,1)} |u-\tilde{u}(y)|^{\sigma} \ d\mu\\
&&+ 2^{\sigma -1} \sum_{y\in Y \cap B(x,R+1)} \nu(y) |\tilde{u} (y) -\eta|^{\sigma}.
\eea
Let us denote by $(I)$ and $(II)$, the first and the second term of the right-hand side of the last inequality, respectively. One can bound $(I)$ by local Poincar\'e inequality (\ref{local Poincare inequality}) for radius $1$ since $r_0\geq 1$.
\Bea
(I) \leq 2^{\sigma -1}C \sum_{y \in Y \cap B(x,R+1)} \int_{B(y,1)} g_u ^{\sigma} \ d\mu.
\Eea
Using the same argument as in Lemma \ref{important lemma} we have,
\bea
\nonumber \label{I} (I) &\leq & 2^{\sigma -1}C\mathcal{M}(Y,1) \int_{B(x,R+2)} g_u ^{\sigma} \ d\mu\\
&\leq & 2^{\sigma -1}Cf(4.5) \int_{B(x,R+2)} g_{u} ^{\sigma} \ d\mu.
\eea 
We obtain a bound on $(II)$ for a certain value of $\eta$ using the Poincar\'e inequality on $(Y,\rho,\nu)$. We choose $x_0 \in Y$ such that $d(x,x_0) <1$. Hence,
$$
Y \cap B(x,R+1) \subset B(x_0,R+2).
$$
To apply the Poincar\'e inequality on $Y$, consider $r=f(8.5)(R+4)$, $h(r)=f(1)f(2r+\frac{1}{2})$. Then using Lemma \ref{qi} and Lemma \ref{ri} we have, 
$B_X(x_0,R+2)\cap Y\subset B_Y(x_0,r)$ and $\frac{\nu(r)}{\nu(\frac{1}{2})}\leq h(r).$ Let
 $$\tilde{u}_r = \frac{1}{\nu(B(x_0,r))} \sum_{y\in B_Y(x_0,r)} \tilde{u}(y) \nu(y).
$$
 Next we choose $\eta=\tilde{u}_r$ and estimate $(II)$ for this particular value of $\eta.$ From Theorem \ref{PGr} we obtain,
$$
(II) \leq 2^{2\sigma-1}  r^{\sigma-1}h(2r) \sum_{y\in B_Y(x_0,r)} |\delta \tilde{u}(y)|_{\sigma}^{\sigma} \nu (y). 
$$
As $B_Y(x_0,r)\subset B_X(x_0,2r)$ from Lemma \ref{qi}, we have,
$$(II) \leq 2^{2\sigma-1}  r^{\sigma-1}h(2r) \sum_{y\in B_X(x_0,2r)\cap Y} |\delta \tilde{u}(y)|_{\sigma}^{\sigma} \nu (y). 
$$
Now, by Lemma \ref{important lemma} we have,
\Bea
(II) &\leq & 2^{3\sigma-2} r^{\sigma-1}C(1+f(4.5))^2f(12.5)f(1)f(4r+0.5)  \int_{B(x_0, 2r+3)} g_u^{\sigma} \ d\mu\\
&\leq & 2^{3\sigma-2} r^{\sigma-1}C(1+f(4.5))^2f(4.5)f(12.5)f(4r+0.5)  \int_{B(x, 2r+4)} g_u^{\sigma} \ d\mu.
\Eea
Let $\lambda=f(8.5)+1$. Then for all $R\geq 1$ and $\eta=\tilde{u}_r$, we have from (\ref{I}), 
$$(I)+(II)\leq 2^{3\sigma-2} r^{\sigma-1}C\lambda^{3}f(12.5)f(4r+0.5)  \int_{B(x, 2r+4)} g_u^{\sigma} \ d\mu.$$
Assuming $R\geq 4\lambda$ we have,
\bea \label{II}
\int_{B(x,R)} |u- \tilde{u}_r| ^{\sigma} \ d\mu\leq 2^{3\sigma-2}\lambda^{3} f(12.5)CR^{\sigma-1}f(4\lambda R)  \int_{B(x, 2\lambda R)} g_u^{\sigma} \ d\mu.
\eea
We have the required uniform Poincar\'e inequality from the following.
\be
\int_{B(x,R)} |u-u_R|^{\sigma} \ d\mu\leq 2^{\sigma} \underset{\tau \in \mathbb{R}}{inf} \int_{B(x,R)} |u -\tau|^{\sigma} \ d\mu\leq \int_{B(x,R)} |u- \tilde{u}_r| ^{\sigma} \ d\mu.
\ee
We refer to \cite{HCB} for proof of the above inequality.
\end{proof}
Moreover, if the measure of every ball of radius $\frac{1}{2}$ is bounded away from zero, then, with some additional assumption on the growth of the measure, we can improve the constants in Theorem \ref{main}.
\begin{thm}\label{main2}
Let $(X,d,\mu)$ be a measured metric space which satisfies $P_{loc}$ for some $r_0\geq 1$, $\sigma \geq 1$ as in (\ref{local Poincare inequality}). Suppose there exist a non-decreasing function $V:(0,\infty)\to \mathbb{R}$  such that 
\be \label{grvol2} \mu(B(x,R))\leq V(R) \ \ {\rm and } \ \  \mu(B(x,\frac{1}{2}))\geq 1, \quad \forall x\in X, \ \ \forall R>0. \ee
Then for any $u\in C(X)$ and its upper gradient $g_u$,
	\be
	\int_{B(x,R)} |u - u_R|^{\sigma}\leq 2^{4\sigma-2}C\lambda^{4} R^{\sigma-1}V(2\lambda R)\int_{B(x,2\lambda R)}g_u^{\sigma}d\mu\ee
	for all $R\geq  4\lambda$ where $\lambda=\max\{V(6.5),V(4.5)+1\}.$
\end{thm}
\begin{proof} Let $(Y,\rho,\nu)$ be an $\epsilon$-discretization of $(X,d,\mu)$ with $\epsilon= 1.$ From the inequality (\ref{Bound on multiplicity 1}) we obtain the multiplicity of the covering $\{B(y,L)\}_{y\in Y}$ for any $L\geq 1$ as follows.
\be\label{BM} \mathcal{M}(Y,L)\leq V(2L+\frac{1}{2}).
\ee
Using the growth of volume (\ref{grvol2}) we obtain the following inequalities from the proof of Lemma \ref{qi} and Lemma \ref{ri} respectively.
\be\label{ri2} \rho(x,y)\leq V(4.5)(d(x,y)+2)\ee
and 
\be \label{qi2} \nu(B_Y(x,R))\leq V(1)V(2R).\ee
Using (\ref{BM}) in Lemma \ref{important lemma}, we obtain 
\be\label{IL} ||\delta{\tilde{u}}||^{\sigma}_{\sigma,B_X(x,R)\cap Y} \leq 2^{\sigma-1}C(1+V(3))^{2}V(6.5)||g_u||^{\sigma}_{\sigma,B(x,R+3)}. 
\ee
For $u\in C(X),$ let $g_u$ be an upper gradient of $u.$ From (\ref{eta}) we have for any $\eta>0$, 
$$\int_{B(x,R)}|u(z)-\eta|^{\sigma}\leq (I)+(II)$$
where $(I)$ and $(II)$ are the first term and the second term in (\ref{eta}), respectively. From (\ref{I}) we have,
\be \label{I2} I\leq 2^{\sigma-1}CV(2.5)\int_{B(x,R+3)}g_u^{\sigma}d\mu.
\ee
Next to obtain an estimate on $(II)$ we choose $x_0\in Y$ such that $d(x,x_0)<1.$ Hence,
$$Y\cap B(x, R+1)\subset B(x_0,R+2).$$
To apply discrete Poincar\'e inequality on $Y$ we choose $r=V(4.5)(R+4)$ and  $h(r)=V(1)V(2r)$ . Therefore, using (\ref{ri2}) and (\ref{qi2}) we obtain,
$$Y\cap B_X(x_0,R+2)\subset B_Y(x_0,r) \ \  {\rm and} \ \ \nu(B_Y(x_0,r))\leq h(r).$$
Next we choose $\eta=\tilde{u}_r$ and estimate $(II)$ for this particular value of $\eta.$ From Theorem \ref{PGr2} we have,
$$(II)\leq 2^{2\sigma-1}r^{\sigma-1}h(r)\|\delta u\|^\sigma_{\sigma,B_Y(x_0,r)}.$$
As $B_Y(x_0,r)\subset B_X(x_0,2r)$,
$$(II)\leq 2^{2\sigma-1}r^{\sigma-1}h(r)\|\delta u\|^\sigma_{\sigma,B_X(x_0,2r)\cap Y}.$$
(\ref{IL}) implies that
\Bea (II)&\leq & 2^{3\sigma-2}C(1+V(3))^{2}V(6.5)V(1)r^{\sigma-1}V(2r)\int_{B(x_0,2r+3)}g_u^{\sigma}d\mu\\
&\leq & 2^{3\sigma-2}C(1+V(3))^{2}V(1)V(6.5)r^{\sigma-1}V(2r)\int_{B(x,2r+4)}g_u^{\sigma}d\mu
.\Eea
Since $R\geq 1$ from (\ref{I2}) we have,
\be \label{II2} (II) \leq  2^{3\sigma-2}C(1+V(3))^{3}V(6.5)r^{\sigma-1}V(2r)\int_{B(x,2r+4)}g_u^{\sigma}d\mu
.\ee
Let $\lambda=V(4.5)+1$ and $R\geq 4\lambda$ . Then putting the value of $r$ in (\ref{II2}) we have,
$$\int_{B(x,R)}|u-\tilde{u}_r|^{\sigma}d\mu\leq  2^{3\sigma-2}CV(6.5)\lambda^{3} R^{\sigma-1}V(2\lambda R)\int_{B(x,2\lambda R)}g_u^{\sigma}d\mu.$$
Now the required result follows from the following inequality.
\be
\int_{B(x,R)} |u-u_R|^{\sigma} \ d\mu \leq 2^{\sigma} \underset{\tau \in \mathbb{R}}{inf} \int_{B(x,R)} |u -\tau|^{\sigma} \ d\mu\leq \int_{B(x,R)}|u-\tilde{u}_r|^{\sigma}d\mu.
\ee
\end{proof}
It would be interesting to improve the Poincar\'e constants we obtained in Theorem \ref{main} and Theorem \ref{main2}. Moreover, it is not clear to us how the Poincar\'e constant changes if the ball of radius $2\lambda R$ on the right-hand side of the inequality is replaced by a ball of radius $R$ .
\section{Poincar\'e inequalities on measured metric spaces with polynomial growth} \label{secpoly}
If the growth function in (\ref{eqmain}) is a polynomial, then as a consequence of Theorem \ref{main} or Theorem \ref{main2}, the growth of the Poincar\'e constant is also polynomial. We discuss this case in this section. 
\begin{cor} Let $(M,g)$ be a complete Riemannian manifold with dimension $n$ and $Ric \geq -kg$ for some $k>0.$ If $\frac{Vol(B(x,R))}{Vol(B(x,\frac{1}{2}))}\leq V_0R^\alpha$ for all $R\geq r>0$ then for any $\sigma\geq 1$ there exist constants $C_0(n,k,r,\sigma,V_0,\alpha)>0$ and $\lambda (V_0,\alpha)\geq 1$ such that   
$$
\int_{B(x,R)} |u-u_R|^{\sigma} dv_g \leq C_0R^{\alpha+\sigma-1} \int_{B(x,\lambda R)} |\nabla u |^{\sigma}  dv_g, \ \ \ \forall u\in C^1(M), \forall R\geq r
$$
where $\nabla u$, $dv_g$ denote the gradient of $u$ and the volume form of $(M,g)$ respectively.
\end{cor}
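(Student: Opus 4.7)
The plan is to apply Theorem \ref{main} directly; the essential work is to verify its two hypotheses---the growth bound (\ref{eqmain}) and the local Poincar\'e inequality $(P_{loc})$ with $r_0\geq 1$---after which the conclusion of Theorem \ref{main} reduces exactly to the stated estimate.

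For the growth condition, the hypothesis supplies $\mu(B(x,R))/\mu(B(x,\tfrac{1}{2}))\leq V_0 R^\alpha$ only for $R\geq r$. To extend this monotonically to the full range $R\geq \tfrac{1}{2}$, I would invoke the Bishop--Gromov volume comparison theorem: under $\mathrm{Ric}\geq -kg$ it bounds the volume ratio on the compact interval $[\tfrac{1}{2},r]$ by a constant $A(n,k,r)$. Setting $V_0':=\max\{V_0,\,2^\alpha A(n,k,r)\}$, the increasing function $f(R):=V_0' R^\alpha$ then dominates the volume ratio on $[\tfrac{1}{2},\infty)$ and verifies (\ref{eqmain}). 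For the local Poincar\'e inequality, Buser's Poincar\'e inequality for Riemannian manifolds with a lower Ricci bound provides a constant $C_B=C_B(n,k,\sigma)$ such that
\[
\int_{B(x,R)} |u-u_R|^\sigma\, d\mu \,\leq\, C_B\, R^\sigma \int_{B(x,R)} |\nabla u|^\sigma\, d\mu \qquad \forall R>0,
\]
and because $|\nabla u|$ is an upper gradient of every $u\in C^1(M)$, this is exactly $(P_{loc})$ with $L=1$, valid for any $r_0\geq 1$.

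With both hypotheses verified, Theorem \ref{main} yields, for $R\geq 4\lambda+1$ with $\lambda=V_0'(7.5)^\alpha+1$, the estimate
\[
\int_{B(x,R)} |u-u_R|^\sigma\, d\mu \,\leq\, 2^{4\sigma}C_B (V_0')^{\sigma+2}(3.5)^{\alpha(\sigma+2)}(\lambda R)^{\sigma-1}V_0'(4\lambda R)^\alpha \int_{B(x,\lambda R)} |\nabla u|^\sigma\, d\mu,
\]
which collapses into $C_0 R^{\sigma+\alpha-1}\int_{B(x,\lambda R)} |\nabla u|^\sigma\, d\mu$ with $C_0$ a function of $n,k,r,\sigma,V_0$. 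The remaining scale range $r\leq R\leq 4\lambda+1$ is handled by invoking Buser's inequality directly and rewriting $C_B R^\sigma = C_B R^{1-\alpha}\cdot R^{\sigma+\alpha-1}$; since $R$ then varies over a compact interval with endpoints depending only on the admissible data, the factor $R^{1-\alpha}$ is bounded and can be absorbed into $C_0$. The only point of genuine care is this bridging between the scale $r$ at which the hypothesis first becomes available and the threshold $4\lambda+1$ at which Theorem \ref{main} activates; because Buser's inequality holds at every scale and the intermediate range is compact with data-dependent endpoints, no real obstacle arises, and the proof is a clean verification of hypotheses followed by substitution.
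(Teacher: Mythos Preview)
Your proposal is correct and follows essentially the same route as the paper: verify the growth bound (\ref{eqmain}) and $(P_{loc})$ from the Ricci lower bound (the paper cites Theorem~1.14 of \cite{HCB}, which is Buser's inequality), then invoke Theorem~\ref{main}. You are in fact slightly more careful than the paper---you extend the growth bound to $[\tfrac12,r]$ via Bishop--Gromov and you explicitly cover the intermediate range $r\le R\le 4\lambda+L$ using Buser, whereas the paper simply sets $f(R)=V_0$ on $(0,r]$ and writes ``the result follows from Theorem~\ref{main}''; the only imprecision in your write-up is that Buser's constant under $\mathrm{Ric}\ge -kg$ is not literally $C_B R^\sigma$ for all $R$ (it carries an exponential factor in $\sqrt{k}\,R$), but since you only use it on bounded $R$-intervals this is harmless.
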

\begin{proof} From Theorem 1.14 in \cite{HCB}, if $Ric\geq -kg$ then $(M,g)$ satisfies a local Poincar\'e inequality (\ref{local Poincare inequality}) and the Poincar\'e constant $C(n,k,R)$ depends on $k$, $n$ and $R$. Let $C=\sup_{R\leq 1}C(n,k,R).$ Then $(M,g)$ satisfies a Poincar\'e inequality as in (\ref{local Poincare inequality}). 
Now the result follows from Theorem \ref{main}. 
\end{proof}
The above result is proved in \cite{HCB} under an additional assumption of a positive lower bound on unit balls. In \cite{CK}, Croke and Karcher gave examples of complete Riemannian manifolds with positive Ricci curvature such that the infimum of unit balls is zero. The main theorem in \cite{HCB} does not hold in this case. More generally, let us consider a measured metric space $(X,d,\mu).$ 
\begin{definition}\label{doubling} A measure $\mu$ is called doubling for $r\geq r_1$ if there exists $C_0\geq 1$ such that 
$$
\frac{\mu(B(x,2r))}{\mu(B(x,r))}\leq C_0, \ \ \forall x\in X, \ \ \forall r\geq r_1.
$$
\end{definition}
$\mu$ is doubling if the growth of volume is polynomial.
\begin{cor}\label{sigma beta sigma} Let $(X,d,\mu)$ be a measured metric space. Suppose $(X,d,\mu)$ satisfies $P_{loc}$ for $r_0, \sigma \geq 1$ as in (\ref{local Poincare inequality}) and $\mu$ is doubling for $R>0$ with the doubling constant $C_0$. Then there exist positive constants $s(C_0)$, $C_1(\sigma,C_0,C)$ and $\lambda(C_0)\geq 1$ such that for any $u\in C(X)$ and its upper gradient $g_u$,
$$\int_{B(x,R)} |u - u_R|^{\sigma} \ d\mu \leq C_1 R^{\sigma+s-1}\int_{B(x,\lambda R)}g_u^{\sigma} d\mu, \ \ \forall R\geq 2\lambda,
	$$
	where $C$ is the constant term in $P_{loc}.$
\end{cor}
\begin{proof} 
If $\mu$ is doubling, then from Lemma 5.2.4 in \cite{SC},
$$
\frac{\mu(B(x,R))}{\mu(B(x,\frac{1}{2}))}\leq C_0^2R^s \ \ {\rm with} \ s=\frac{\log C_0}{\log 2}.
$$
for all $x\in X$ and $R \geq \frac{1}{2}$. Define,
\Bea f(R) &=& C_0^2R^s, \quad \forall R\geq \frac{1}{2};\\
          &=& 1\quad {\rm otherwise}. 
          \Eea
Then $\frac{\mu(B(x,R))}{\mu(B(x,\frac{1}{2}))}\leq f(R)$, for all $R>0$. Hence the result follows from Theorem \ref{main}. 
\end{proof}
G. Carron showed that a complete Riemannian manifold satisfying the doubling condition has finitely many ends in \cite{GC}. Riemannian manifolds satisfying $P_{loc}$ and the doubling condition also satisfy the parabolic Harnack inequality and Li-Yau type heat kernel estimates \cite{GIS}. The above corollary gives a curvature-free criterion on a Riemannian manifold with doubling measure to support a $(\sigma,\beta,\sigma)$-type uniform Poincar\'e inequality.  Some interesting examples of measured metric spaces which are not Riemannian manifolds but satisfy the criterion of Corolarry \ref{sigma beta sigma} are as follows. Topological manifolds with Ahlfors regular measures, which also satisfy a local contractibility condition, support $P_{loc}$ \cite{SS}. Carnot groups with Carnot-Carath\'eodory metrics satisfy $P_{loc}$ and a polynomial growth condition. We refer to Sections 10 and 11 in \cite{HK} for more details on these two types of examples.
\begin{cor} Let $(X,d,\mu)$ be a measured metric space which satisfies $P_{loc}$ for $r_0, \sigma\geq 1$ as in (\ref{local Poincare inequality}) and $\mu(B(x,\frac{1}{2}))\geq 1$. Suppose $\mu$ is doubling for $R\geq r_1$ with the doubling constant $C_0$ and $\mu(B(x,r_1))\leq V_0$, $\forall x\in X$. Then there exist positive constants $s(C_0)$, $C_1(\sigma,C_0,C,V_0)$ and $\lambda(C_0,V_0)\geq 1$ such that for any $u\in C(X)$ and its upper gradient $g_u$,
$$\int_{B(x,R)} |u - u_R|^{\sigma} \ d\mu \leq C_1 R^{\sigma+s-1}\int_{B(x,\lambda R)}g_u^{\sigma} d\mu \ \ \forall R\geq 2\lambda,
$$
where $C$ is the constant term in $P_{loc}.$
\end{cor}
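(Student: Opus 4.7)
The plan is to reduce the corollary directly to Theorem \ref{main2}. That theorem already has the right shape; I just need to produce a valid polynomial-type upper bound $V(R)$ on $\mu(B(x,R))$, because the lower bound $\mu(B(x,\tfrac{1}{2})) \geq 1/c$ and the local Poincar\'e hypothesis are handed to me.

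First, I would build $V$ from the doubling assumption by a standard iteration. For $R \leq r_1$, monotonicity gives $\mu(B(x,R)) \leq V_0$. For $R > r_1$, pick the integer $k$ with $2^{k-1} r_1 < R \leq 2^k r_1$; then iterating the doubling inequality $k$ times starting at radius $r_1$ gives
\[
\mu(B(x,R)) \leq \mu(B(x, 2^k r_1)) \leq C_0^k \, \mu(B(x,r_1)) \leq V_0 C_0^k,
\]
and since $k \leq \log_2(R/r_1) + 1$, we obtain $\mu(B(x,R)) \leq C_0 V_0 (R/r_1)^s$ with $s = \log_2 C_0$. I can therefore set
\[
V(R) := V_0 \max\bigl\{1,\; C_0 (R/r_1)^s\bigr\},
\]
which is increasing, satisfies $\mu(B(x,R)) \leq V(R)$ for every $x$ and every $R > 0$, and grows at most like $R^s$ up to constants depending only on $C_0$, $V_0$, $r_1$.

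Second, I would plug this $V$ into Theorem \ref{main2}. The hypotheses there are exactly $\mu(B(x,\tfrac12)) \geq 1/c$, the pointwise bound $\mu(B(x,R)) \leq V(R)$, and $(P_{loc})$ for $r_0 \geq 1$, all of which hold. The theorem outputs $\lambda = cV(4.5) + 1$ and the inequality
\[
\int_{B(x,R)}|u - u_R|^\sigma\, d\mu \;\leq\; 2^{4\sigma} c^3 C\, V^{\sigma+1}(2.5)\, (\lambda R)^{\sigma - 1}\, V(2\lambda R)\int_{B(x,\lambda R)} |g_u|^\sigma\, d\mu
\]
for all $R \geq \max\{r_0, 4\lambda + L\}$. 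Both $V(2.5)$ and $\lambda$ are bounded by constants depending only on $c$, $C_0$, $V_0$, $r_1$, so they can be absorbed into the final constant. Choosing $r := \max\{r_0,\, 4\lambda + L,\, r_1/(2\lambda)\}$ guarantees $2\lambda R \geq r_1$, which puts $V(2\lambda R)$ in the polynomial regime: $V(2\lambda R) \leq C_0 V_0 (2\lambda/r_1)^s R^s$. The radius factor on the right-hand side then becomes $(\lambda R)^{\sigma-1} V(2\lambda R) \leq C' R^{\sigma + s - 1}$ with $C'$ depending on $\sigma, C_0, V_0, r_1$. Collecting all multiplicative constants into $C_1(\sigma, C_0, C, c, r, V_0)$ yields the desired inequality.

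This proof is essentially a bookkeeping exercise, and the only mild obstacle is ensuring that $V$ is genuinely a valid pointwise upper bound across \emph{all} radii (including the sub-$r_1$ range), so that the hypotheses of Theorem \ref{main2} are cleanly satisfied rather than only asymptotically; the definition of $V$ above as a max handles this uniformly.
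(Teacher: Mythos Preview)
Your proposal is correct and follows essentially the same approach as the paper: construct a polynomial upper bound $V(R)$ on $\mu(B(x,R))$ from the doubling hypothesis (the paper cites the standard iteration as Theorem 5.2.2 in \cite{LP}, defines the piecewise function $V_0$ on $[\tfrac12,r_1]$ and $\tfrac{V_0}{r_1^s}R^s$ beyond, and invokes Theorem \ref{main2}). Your version is in fact more carefully written than the paper's own proof, which is terse and has minor notational slips; your use of the $\max$ to guarantee a global upper bound and your explicit choice of $r$ to force $2\lambda R \geq r_1$ are exactly the right bookkeeping.
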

\begin{proof} 
If $\mu$ is doubling then for all $R\geq r$ and $x\in X$ from Lemma 5.2.4 in \cite{SC},
\be 
\frac{\mu(B(x,R))}{\mu(B(x,r))}\leq C_0\left(\frac{R}{r}\right)^s \ \ {\rm with} \ s=\frac{\log C_0}{\log 2}.
\ee
\Bea {\rm Define} ,\ \ f(R)&=& V_0, \ \ \forall \ \frac{1}{2}\leq R \leq r_1;\\
  &=& \frac{V_0}{r^s} R^s ,\ \ \forall R>r_1.
\Eea 
Now the required result is an immediate consequence of Theorem \ref{main2}. 
\end{proof}
Hajlasz and Koskela established a different type of global Poincar\'e inequality on metric spaces with doubling measures, which satisfies a chain condition $C(\lambda,M)$ for $\lambda$, $M\geq 1$ in \cite{HK2}. They also showed that every ball in a geodesic metric space satisfies this chain condition for every $\lambda\geq 1.$  Now, consider a subset $\Omega$ on a geodesic metric space with a doubling measure, which satisfies the chain condition  $C(\lambda,M)$ for some $\lambda$, $M\geq 1$ . From Theorem 1 in \cite{HK2}, $\Omega$ supports a Poincar\'e inequality in the sense of \cite{HK2} if a Poincar\'e inequality holds on every ball $B$ with $\lambda B\subset \Omega$. If  $\Omega$ contains a  ball $B$ with a large radius such that $\lambda B\subset \Omega$, then $B$  should necessarily support a Poincar\'e inequality for the conclusion to hold. Whereas, in our case, if $P_{loc}$ holds on all unit balls, then a Poincar\'e inequality holds on any ball with a sufficiently large radius as stated in Corollary \ref{sigma beta sigma}. One can also describe the Poincar\'e constant more precisely using Theorem \ref{main}. 

A more general notion doubling condition $DV_{loc}$ is used in \cite{SLT} and \cite{Ch} to prove various analytic and geometric properties of measured metric spaces. $(X,d,\mu)$ satisfies $DV_{loc}$ if for any $r>0$ there exists a constant $C(r)>0$ such that 
$$
\frac{\mu(B(x,2r))}{\mu(B(x,r))}\leq C(r), \ \ \forall x\in X.
$$ 
We observe that $(X,d,\mu)$ satisfies $DV_{loc}$ for $r\geq \frac{1}{2}$ if and only if there exists $f:(0,\infty)\to (0,\infty)$ such that (\ref{eqmain}) holds. Next we study Poincar\'e inequalities on $(X,d,\mu)$ when $f(r)$ grows exponentially in Section \ref{sec4}. 
\section{Poincar\'e inequality on Gromov hyperbolic spaces}\label{sec4} 
Complete simply connected Riemannian manifolds with negative sectional curvature have  exponential growth of volume. Gromov-hyperbolic spaces are generalizations of them. A Gromov hyperbolic space is a $\delta$-hyperbolic space for some $\delta\geq 0.$ $\delta$-hyperbolic spaces are defined in various ways, but the value of $\delta$ changes if we change the definition. As theorems in this section depend on the precise value of $\delta$, we consider the following definition of $\delta$-hyperbolic spaces in this paper. 
\begin{definition} Let $(X,d)$ be a proper geodesic metric space. A geodesic triangle $T$ is called $\delta$-thin if any point on one of its sides is contained in the $\delta$-neighbourhood of the union of the other two sides. $(X,d)$ is called a $\delta$-hyperbolic space if all of its geodesic triangles are $\delta$-thin. 
\end{definition} 
Let $\Gamma$ be a finitely generated group with a finite set of generators $S$. $(\Gamma,S)$ is called a hyperbolic group if the Cayley graph $\Gamma$ of $(\Gamma,S)$ is $\delta$-hyperbolic as a metric space. If a discrete group $\Gamma$ acts on a $\delta$-hypebolic space properly and co-compactly, then $\Gamma$ is a hyperbolic group.
\begin{thm}\label{Phy} Let $(X,d,\mu)$ be a measured $\de$-hyperbolic space which supports $P_{loc}$ for some $r_0, \sigma\geq 1$ as in (\ref{local Poincare inequality}). Let $\Gamma$ be a discrete group acting on $X$ isometrically and properly such that the diameter of the quotient space $\Gamma \backslash X$ is bounded by $D\geq 1$. Suppose the action of $\Gamma$ is measure preserving, the entropy of $(X,d,\mu)$ is bounded by $H$ and $\mu(B(x,\frac{1}{2}))\geq 1$ for all $x\in X$. Then there exist $C_0(\delta,D,H,\mu)>0$ and $\lambda \geq 1$ such that for any $u\in C(X)$ and its upper gradient $g_u$,
$$\int_{B(x,R)} |u - u_R|^{\sigma} d\mu \leq  \frac{2^{4\sigma+6HD+\frac{25}{4}}\lambda^{\frac{41}{4}+6HD} CV_0}{5^{6HD+\frac{25}{4}}r^{6HD+\frac{25}{4}}e^{H(12r-D)}}R^{\sigma+6HD+\frac{21}{4}}e^{12\lambda HR}\int_{B(x,2\lambda R)}g_u^{\sigma} d\mu,   $$
$\forall R\geq \frac{5r}{2}$ and $\forall z\in X$ where $r=7D+4\delta$, $V_0=\sup_{x\in X}\{\mu(B(x,\frac{5r}{2}))\}$ and  $\lambda=V_0+1$.
\end{thm}
\begin{proof} Let $V_0=\sup\{\mu(B(x,\frac{5r}{2}))|x\in X\}$. Since the action of $\Gamma$ is co-compact, $V_0$ is finite. From Theorem 1.9 part (i) in \cite{BCS} we have,
\Bea \mu(B(x,R))\leq \left(\frac{2}{5}\right)^{\frac{25}{4}+6HD}\frac{3V_0R^{\frac{25}{4}+6HD}e^{6HR}}{r^{\frac{25}{4}+6HD}e^{H(12r-D)}}.\Eea
Define,
\Bea V(R)&=& V_0, \quad \forall \ R< \frac{5r}{2};\\
         &=& \left(\frac{2}{5}\right)^{\frac{25}{4}+6HD}\frac{3V_0R^{\frac{25}{4}+6HD}e^{6HR}}{r^{\frac{25}{4}+6HD}e^{H(12r-D)}}, \quad \forall \ R\geq \frac{5r}{2}.
\Eea
Since $D\geq 1$, $r\geq 7$, $\frac{5r}{2}\geq 6.5$. Hence $V(4.5)=V(6.5)=V_0.$ Now from Theorem \ref{main2} for all $R\geq \frac{5r}{2}$,
\Bea \int_{B(x,R)} |u(z) - u_R|^{\sigma} d\mu(z)&\leq& 2^{4\sigma-2}C\lambda^{2\sigma+1} R^{\sigma-1}V(2\lambda R)\int_{B(x,2\lambda R)}g_u(z)^{\sigma} d\mu(z) \\
 &\leq & \frac{2^{4\sigma+6HD+\frac{25}{4}}\lambda^{2\sigma+\frac{33}{4}+6HD} CV_0}{5^{6HD+\frac{25}{4}}r^{6HD+\frac{25}{4}}e^{H(12r-D)}}R^{\sigma+6HD+\frac{21}{4}}e^{12\lambda HR}\int_{B(x,2\lambda R)}g_u(z)^{\sigma} d\mu(z)   . 
\Eea
\end{proof}
Since $\Gamma\backslash X$ is compact $\inf_{x\in X} \mu(B(x,\frac{1}{2}))>0.$ If this quantity is less than 1, one can scale the metric suitably to obtain the required lower bound. Clearly, if $H$ is zero then the Poincar\'e constant in the above theorem is a polynomial in $R$ and $(X,d,\mu)$ satisfies a $(\sigma,\beta,\sigma)$-type uniform Poincar\'e inequality as stated in Section \ref{secpoly}. $X$ is called elementary if the boundary of $X$ contains at most two elements. If $X$ is elementary, then the entropy of the space is zero (see Proposition 8.43 in \cite{BCS}). Therefore the Poincar\'e constant of an elementary $\delta$-hyperbolic space grows polynomially. We recall Proposition 1.3 from \cite{BCS} on non-elementary $\delta$-hyperbolic spaces.
\begin{prop}\cite{BCS} For every non-elementary $\delta$-hyperbolic metric space $(X,d)$ and for every group $\Gamma$ acting on it properly by isometries, if the diameter of $\Gamma \backslash X \leq D<\infty$ then, 
$$Ent(X,d,\mu)\geq \frac{ln2}{27\delta+10D}$$
where $\mu$ is any measure on $(X,d)$ preserved by the action of $\Gamma$.   
\end{prop}
Therefore, when $(X,d)$ is non-elementary the growth of the Poincar\'e constant is exponential. Theorem \ref{Phy} only gives an upper bound of the best constant for which a global Poincar\'e inequality on $(X,d,\mu)$ holds. It would be interesting to know if the best Poincar\'e constant also grows exponentially in this case.
 
If a discrete group acts on a Riemannian manifold $(M,g)$ as mentioned in Theorem \ref{Phy}, then the Ricci curvature of $M$ is bounded. Hence $(M,g)$ supports $P_{loc}$. Consequently, $(M,g)$ satisfies a global Poincar\'e inequality as stated in Theorem \ref{Phy}. A large class of compact singular Riemannian spaces with almost smooth metrics also support a local Poincar\'e inequality. K. Akutagawa, G. Carron, and R. Mazzeo showed that compact stratified spaces with iterated edge metrics satisfy local Poincar\'e and sobolev inequality \cite{ACM}. We refer to \cite{ACM2}, \cite{BKMR} for a detailed geometric description of stratified spaces, examples, and more analytic properties on them. If the quotient space $\Gamma\backslash X$ in Theorem \ref{Phy} is a stratified space then $X$ support a $P_{loc}$ by Theorem \ref{PCov}. Hence $X$ satisfies a global Poincar\'e inequality. Consequently, $\delta$-hyperbolic polyhedral spaces with piece-wise smooth Riemannian metric admitting a co-compact group action as stated in Theorem \ref{Phy} support a global Poincar\'e inequality.

When $(X,d)$ is a $\delta$-hyperbolic graph an assumption on local Poincar\'e inequality is not required.
\begin{cor} Let $\Gamma$ be a $\delta$ hyperbolic Cayley graph of a hyperbolic group $G$ equipped with a measure $\mu$. Let $|g_u|_\sigma$ denote the point-wise $l^{\sigma}$-norm of the gradient of a function $u$ on $\Gamma$. Suppose $c\leq \mu(x)\leq C$ for all $x\in \Gamma$ and the entropy of $(\Gamma,\mu)$ is bounded by $H$. Then for $R\geq r=10(1+\delta)$, $\sigma\geq 1$ and $u:\Gamma\to \mathbb{R}$
$$\int_{B(p,R)}|u-u_R|^\sigma d\mu\leq \frac{2^{\sigma+2} C\nu(r)}{cr^{\frac{25}{4}}e^{48H(1+\delta)}}R^{\sigma+\frac{21}{4}}e^{6HR}\int_{B(p,R)}|g_u|_{\sigma} ^{\sigma}d\mu, \ \ \forall p\in X$$
where $\nu(r)$ is the number of elements of $\Gamma$
in a ball of radius $r$.
\end{cor}
\begin{proof}
Let $\nu$ be the counting measure on $\Gamma.$ Then $\nu$ is a $G$-invariant measure on $\Gamma$. Hence, for any $R>0$, $\nu(B(x,R))$ is same for all $x\in \Gamma.$ We define $\nu(R)=\nu(B(x,R)).$ $G$ acts on the set of vertices of $\Gamma$ transitively. Hence the diameter of $G\backslash \Gamma$ bounded above by $1$. Therefore, from Theorem 1.9 part (ii) in \cite{BCS} we have,
$$\nu(R)< 3\nu(r)\left(\frac{R}{r}\right)^{\frac{25}{4}}e^{6H(R-\frac{4r}{5})}, \ \ \forall \ R\geq r=10(1+\delta).$$
Since $\mu(B(x,R))\leq C\nu(R)$, applying Theorem \ref{PGr2} for all $R\geq r$ we have,
\Bea \int_{B(p,R)}|u(x)-u_R|^\sigma d\mu(x)&\leq &  \frac{2^{\sigma}.3C\nu(r)}{cr^{\frac{25}{4}}}R^{\sigma+\frac{21}{4}}e^{6H(R-\frac{4r}{5})}\int_{B(p,R)}|g_u|_{\sigma}^{\sigma}(x)d\mu(x) \\
&\leq & \frac{2^{\sigma+2 }C\nu(r)}{cr^{\frac{25}{4}}e^{48H(1+\delta)}}R^{\sigma+ \frac{21}{4}}e^{6HR}\int_{B(p,R)}|g_u|_{\sigma}^{\sigma}(x)d\mu(x).
\Eea
Hence the proof follows.
\end{proof} 
Examples of hyperbolic groups are fundamental groups of negatively curved manifolds, finitely generated free groups, or any discrete group acting on a $\delta$-hyperbolic space properly and isometrically such that the quotient space is compact. Hyperbolic groups play an essential role in geometric group theory \cite{BH}. A general $\delta$-hyperbolic graph satisfies the following Poincar\'e inequality.
\begin{cor}\label{HyGr} Let $(X,d,\mu)$ be a $\delta$-hyperbolic graph.  Let $\Gamma$ be a discrete group acting on $X$ isometrically and properly such that the diameter of the quotient space $\Gamma \backslash X$ is bounded by $D\geq 1$. Let $|g_u|_\sigma$ denote the point-wise $l^{\sigma}$-norm of the gradient of a function $u$ on $\Gamma$. Suppose the action of $\Gamma$ is measure preserving, and the entropy of $(X,d,\mu)$ is bounded by $H$. Then For any $u\in C(X)$, $\forall R\geq \frac{5r}{2}$ and $\forall z\in X$,
$$\int_{B(x,R)} |u - u_R|^{\sigma} d\mu \leq \frac{2^{\sigma+8+6HD}cV_0}{5^{6(1+HD)}r^{\frac{25}{4}+6HD}e^{H(12r-1)}}R^{\sigma+\frac{21}{4}+6HD}e^{6HR} \int_{B(x,R)}|g_u|_{\sigma}^{\sigma}d\mu$$
where $r=7D+4\delta$, $V_0=\sup_{x\in X}\{\mu(B(x,\frac{5r}{2}))\}$, and $\frac{1}{c}=\inf_{x\in X} \mu(x)$.
\end{cor}
\begin{proof} Let $V_0=\sup\{\mu(B(x,\frac{5r}{2}))|x\in X\}$. Since the action of $\Gamma$ is co-compact $V_0$ is finite. From Theorem 1.9 part (i) in \cite{BCS} we have, for all $R\geq \frac{5r}{2}$,
\Bea \mu(B(x,R))\leq \left(\frac{2}{5}\right)^{\frac{25}{4}+6HD}\frac{3V_0R^{\frac{25}{4}+6HD}e^{6HR}}{r^{\frac{25}{4}+6HD}e^{H(12r-D)}}.\Eea
Define, 
\Bea f(R) &=& V_0, \quad  \forall R\leq \frac{5r}{2};\\
          &=& \left(\frac{2}{5}\right)^{\frac{25}{4}+6HD}\frac{3V_0R^{\frac{25}{4}+6HD}e^{6HR}}{r^{\frac{25}{4}+6HD}e^{H(12r-D)}} , \quad \forall R>\frac{5r}{2}.
          \Eea
Since $\Gamma \backslash X$ is compact and the action of $\Gamma$ is measure preserving, $\inf_{x\in X} \mu(x)>0.$ Now,  as a consequence of Theorem \ref{PGr2} we have,
\Bea \int_{B(p,R)}|u(x)-u_R|^\sigma d\mu(x)&\leq & 2^{\sigma}c R^{\sigma-1}f(R)\int_{B(p,R)}|g_u|_{\sigma}^{\sigma}(y)d\mu(y) \\
&\leq & \frac{2^{\sigma+8+6HD}cV_0}{5^{6(1+HD)}r^{\frac{25}{4}+6HD}e^{H(12r-1)}}R^{\sigma+\frac{21}{4}+6HD}e^{6HR} \int_{B(p,R)}|g_u|_{\sigma}^{\sigma}(y)d\mu(y).
\Eea
\end{proof}
\section{Uniform Poincar\'e inequalities and growth of groups}\label{sec5} 
Relations between the growth of groups, volume, and curvature motivate us to understand the dependence of the growth of Poincar\'e constants on the growth of groups. Let $\Gamma$ be a discrete subgroup of isometries of a measured metric space $(X,d,\mu)$ acting on it freely and properly such that $\Gamma\backslash X$ is compact. The point-wise systole is defined as $sys_{\Gamma}(x)=\inf_{\gamma\neq e} d(x, \gamma.x).$ The systole $sys_{\Gamma}$ of $X$ is the infimum of $sys_{\Gamma}(x)$ over $x\in X$. Since $\Gamma \backslash X$ is compact $sys_{\Gamma}$ is non-zero. Define the quotient metric $\bar{d}$ on $\Gamma\backslash X$ as follows. For $x,y\in \Gamma\backslash X$ choose $\tilde{x}\in p^{-1}(x)$ and $\tilde{y}\in p^{-1}(y).$ Then
\be 
\bar{d}(x,y)=\inf_{\gamma\in \Gamma}d(\tilde{x},\gamma.\tilde{y})= \inf_{\gamma_1,\gamma_2\in \Gamma}d(\gamma_1.\tilde{x},\gamma_2.\tilde{y}).\ee
The quotient map $p$ is a covering map and  $p:B(\tilde{x},\frac{sys_{\Gamma}}{4})\to B(x,\frac{sys_{\Gamma}}{4})$ is an isometry for all $x\in \Gamma\backslash X$ and $\tilde{x}\in p^{-1}(x)$. Suppose the action of $\Gamma$ is measure-preserving. Define, the quotient Borel measure $\bar{\mu}$ on $X\backslash \Gamma$ such that $p$ restricted to each $B(\tilde{x},\frac{sys_{\Gamma}}{4})$ is measure-preserving for all $\tilde{x} \in X$.
\begin{lem} \label{weak inequality} Suppose $(X\backslash \Gamma, \bar{d},\bar{\mu})$ satisfies a Poincar\'e inequality, i.e. for a fixed $\sigma\geq 1$ there exist a constant $C(R)>0$ such that for any $u\in C(X)$ and its upper gradient $g_u$,
\be\label{weakpoincare}\int_{B(x,R)} |u-u_R|^{\sigma} d\bar{\mu} \leq C(R)\int_{B(x,R)} g_u^{\sigma} d\bar{\mu}, \quad \forall R\in (0,\frac{sys_{\Gamma}}{4}] 
\ee
then $(X,d,\mu)$ satisfies $P_{loc}$ for any $R \leq \frac{sys_{\Gamma}}{4}$.
\end{lem}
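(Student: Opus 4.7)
The key observation is that the hypothesis $R \leq \frac{sys_\Gamma}{4L}$ forces $LR \leq \frac{sys_\Gamma}{4}$, so the covering projection $p : X \to \Gamma \backslash X$ restricts to an isometric and measure-preserving bijection from $B(\tilde x, LR)$ onto $B(x, LR)$, where $x = p(\tilde x)$. My plan is to use this local isometry to push the pair $(u, g_u)$ down to the quotient, apply the hypothesis (\ref{weakpoincare}) there on the ball $B(x, R)$, and then transfer the resulting inequality back to $X$.

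\textbf{Push-down.} Fix $\tilde x \in X$ and $R \leq \frac{sys_\Gamma}{4L}$, set $x = p(\tilde x)$, and let $q = (p|_{B(\tilde x, LR)})^{-1}$. Given $u \in C(X)$ with upper gradient $g_u$, I would define $\bar u := u \circ q$ and $\bar g := g_u \circ q$ on $B(x, LR)$. Since $q$ is an isometry, any curve $\gamma$ in $B(x, LR)$ lifts to a curve $q \circ \gamma$ in $B(\tilde x, LR)$ of the same length, which immediately yields that $\bar g$ is an upper gradient of $\bar u$ on $B(x, LR)$.

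\textbf{Extension and application.} To feed $\bar u$ into (\ref{weakpoincare}) one needs a continuous function $v$ on all of $\Gamma\backslash X$ and a globally-defined upper gradient $g_v$ with $v = \bar u$ and $g_v = \bar g$ on $B(x, LR)$. Pick any continuous extension $v$ of $\bar u$ (available by Tietze extension on the metrizable space $\Gamma\backslash X$) and set $g_v := \bar g$ on $B(x, LR)$ and $g_v := +\infty$ on the complement; $g_v$ is then an upper gradient of $v$ because any curve leaving $B(x, LR)$ renders the right-hand side of the upper-gradient inequality infinite. Applying (\ref{weakpoincare}) to $(v, g_v)$ on $B(x, R) \subset B(x, LR)$ and using that $p|_{B(\tilde x, LR)}$ is measure-preserving (so that $\bar u_R$, the quotient mean over $B(x, R)$, coincides with the $X$-mean $u_R$ over $B(\tilde x, R)$), one obtains
\begin{equation*}
\int_{B(\tilde x, R)} |u - u_R|^\sigma \, d\mu = \int_{B(x, R)} |\bar u - \bar u_R|^\sigma \, d\bar\mu \leq C(R) \int_{B(x, LR)} |\bar g|^\sigma \, d\bar\mu = C(R) \int_{B(\tilde x, LR)} |g_u|^\sigma \, d\mu.
\end{equation*}
Setting $C := \sup_{0 < R \leq sys_\Gamma/(4L)} C(R)$ (finite because $C(R)$ is assumed continuous in $R$, or by a routine monotonicity reduction) then delivers $(P_{loc})$ with $r_0 = \frac{sys_\Gamma}{4L}$ and the same constant $L$.

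\textbf{Main obstacle.} The only delicate point is the extension step: making sure that $(v, g_v)$ qualifies globally as a function/upper-gradient pair in the sense demanded by (\ref{weakpoincare}) while leaving the integral on the right unchanged on $B(x, LR)$. If the working definition of upper gradient forbids the value $+\infty$, one can instead choose $v$ to be a compactly supported continuous extension (e.g.\ $v = \phi \cdot \bar u$ composed with a retraction, where $\phi$ is a Lipschitz cutoff) and use $g_v = \bar g + K\mathbf{1}_{\Gamma\backslash X \setminus B(x, LR)}$ for $K$ large enough to absorb the variation of $v$ outside $B(x, LR)$; since only the integral of $g_v$ over $B(x, LR)$ appears on the right, this modification is harmless. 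Beyond this bookkeeping, everything is a direct transfer through the local isometry $p|_{B(\tilde x, LR)}$.
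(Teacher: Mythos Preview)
Your proposal is correct and follows essentially the same route as the paper: exploit that $p$ restricts to a measure-preserving isometry on balls of radius $<sys_\Gamma/4$, push the pair $(u,g_u)$ down to $\Gamma\backslash X$, apply (\ref{weakpoincare}) there, and pull back. The only difference is cosmetic, in how the global extension on the quotient is manufactured: the paper multiplies $\tilde u$ by a bump function $\phi$ (equal to $1$ on $B(\tilde x,LR)$, supported in $B(\tilde x,R')$ for some $LR<R'<sys_\Gamma/4$) and pushes forward via $u(y)=\sum_{\tilde y\in p^{-1}(y)}\tilde u(\tilde y)\phi(\tilde y)$, which automatically yields a continuous function on all of $\Gamma\backslash X$ vanishing outside $B(x,R')$; you use Tietze extension and set the upper gradient to $+\infty$ off $B(x,LR)$. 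Both devices serve the same purpose and both are equally casual about verifying that the resulting gradient is a \emph{global} upper gradient (the paper's $g_u=\sum g_{\tilde u}\phi$ ignores the variation coming from $\phi$ just as your first version ignores whether $+\infty$ is admissible). Your discussion under ``Main obstacle'' is in fact more explicit about this point than the paper's own proof.
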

\begin{proof} Consider a continuous function $\tilde{u}:X \to \mathbb{R}$ and an upper gradient $g_{\tilde{u}}$ of $\tilde{u}$.  Given $R <\frac{sys_{\Gamma}}{4} $ choose $R'$ such that  $R< R' < \frac{sys_{\Gamma}}{4}$ and a bump function $\phi : X \to \mathbb{R}$ such that $\phi =1$ on $B(\tilde{x},R)$ and $\phi=0$ outside $B(\tilde{x},R')$. Define, $u:\Gamma\backslash X\to \mathbb{R}$ as 
$$u (y) = \sum_{\tilde{y}\ \in\ p^{-1}(y)}\tilde{u}(\tilde{y}) \phi(\tilde{y}).$$
Corresponding to the upper gradient $g_{\tilde{u}}$ we can define an upper gradient for $u$ on $B(x,R)$ as
$$g_u (y) = \sum_{\tilde{y}\ \in\ p^{-1}(y)}g_{\tilde{u}}(\tilde{y}) \phi(\tilde{y}).$$
To see that $g_u$ is indeed an upper gradient for $u$ on $B(x,R)$ choose a unit speed curve $\alpha : [0,1] \to B(x,R)$ and let $\tilde{\alpha}$ be its lift via the map $p$ passing through $B(\tilde{x},R)$. Then,
	$$|u(\alpha(1))-u(\alpha(0))| = |u(\tilde{\alpha}(1))-u(\tilde{\alpha}(0))| \leq \int_{0}^{1} g_{\tilde{u}} (\tilde{\alpha}(t)) dt = \int _{0}^{1} g_{u}(\alpha(t)) dt .$$
Let $C=\sup \{C(R): R\leq \frac{sys_{\Gamma}}{4}\} $. Now we have the required local Poincar\'e inequality as
	$$\ \int_{B(\tilde{x},R)} |\tilde{u}-\tilde{u}_R|^{\sigma} d\mu= \int_{B(x,R)} |u-u_R|^{\sigma} d\bar{\mu} \leq C\int_{B(x,R)} g_u^{\sigma} d\bar{\mu} = C\int_{B(\tilde{x},R)} |g_{\tilde{u}}|^{\sigma} d\mu .$$
	\end{proof} 
Suppose $\Gamma$ is a discrete group acting on $(X,d)$ freely, properly, and isometrically. Then recall that $$F_{\Gamma}(R)=|\Gamma x\cap \overline{B(x,R)}|.$$ 	
	
\begin{thm}\label{PCov} Consider a measured metric space $(X,d,\mu).$ Let $\Gamma$ be a discrete subgroup of isometries of $(X,d,\mu)$ acting on it freely and  properly such that $\Gamma\backslash X$ is compact and $sys_{\Gamma}\geq 4$. Suppose $\mu(B(x,\frac{1}{2}))\geq 1$ for all $x\in X$. Let the diameter and the volume of $\Gamma\backslash X$ be bounded above by $D$ and $V_0$, respectively. If $(\Gamma\backslash X,\bar{d},\bar{\mu})$ satisfies a Poincar\'e inequality (\ref{weakpoincare}) then for any $u\in C(X)$ and its upper gradient $g_u$,
	\be
	\int_{B(x,R)} |u-u_R|^{\sigma} \ d\mu \leq 2^{4\sigma-2}CV_0\lambda^{4} R^{\sigma-1}F_{\Gamma}\left(2\lambda R
	\right)\int_{B(x,2\lambda R)} g_u^{\sigma} \ d\mu
	\ee
	for all $R \geq 4\lambda$ and $\sigma \geq 1$, where $\lambda = V_0+D$. 
\end{thm}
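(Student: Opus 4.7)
The plan is to verify the hypotheses of Theorem \ref{main2} for $(X,d,\mu)$ --- a local Poincar\'e inequality with $r_0\geq 1$ and a uniform bound $\mu(B(x,R))\leq V(R)$ --- and then read off the stated inequality from its conclusion, enlarging $\lambda$ slightly to absorb a $+D$ correction inside $F_{\Gamma}$.

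The local Poincar\'e inequality comes directly from Lemma \ref{weak inequality}: the hypothesis $sys_{\Gamma}\geq 4L$ ensures that the Poincar\'e inequality (\ref{weakpoincare}) on $\Gamma\backslash X$ lifts to $(P_{loc})$ on $X$ with $r_0=1$ and the same constants $C$ and $L$. For the growth function, the hypothesis $sys_{\Gamma}\geq 18$ makes the quotient map $p$ an isometry on every open ball of radius less than $9$; hence $\mu(B(x,R))=\bar\mu(B(p(x),R))\leq V_0$ whenever $R\leq 9$, which in particular yields $V(2.5)=V(4.5)=V_0$ and the finiteness of $c$. For $R>9$, I would use the covering argument indicated in the introduction: cover $B(x,R)$ by translates $\gamma F$ of a measurable fundamental domain $F\subset X$ with $\mathrm{diam}(F)\leq D$ and $\mu(F)\leq V_0$, observe that each contributing translate has its base point inside $B(x,R+D)$, and conclude $\mu(B(x,R))\leq V_0 F_{\Gamma}(R+D)$. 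Setting $V(R):=V_0$ for $R\leq 9$ and $V(R):=V_0F_{\Gamma}(R+D)$ for $R>9$ then gives a valid growth function.

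Plugging this into Theorem \ref{main2} gives, for $R\geq 4\lambda_0+L$ with $\lambda_0:=cV_0+1$,
\begin{equation*}
\int_{B(x,R)}|u-u_R|^\sigma\,d\mu\leq 2^{4\sigma}c^3 CV_0^{\sigma+1}(\lambda_0 R)^{\sigma-1}V(2\lambda_0 R)\int_{B(x,\lambda_0 R)}|g_u|^\sigma\,d\mu.
\end{equation*}
Enlarging $\lambda_0$ to $\lambda:=cV_0+D$ and applying the monotonicity of $F_{\Gamma}$ together with the elementary estimate $2\lambda_0 R+D\leq 2\lambda R$ (valid for $R$ above the stated threshold $4\lambda+L$) converts the factor $V(2\lambda_0 R)\leq V_0 F_{\Gamma}(2\lambda_0 R+D)$ into $V_0 F_{\Gamma}(2\lambda R)$; combining this with $V^{\sigma+1}(2.5)=V_0^{\sigma+1}$ produces the factor $V_0^{\sigma+2}F_{\Gamma}(2\lambda R)$ appearing in the statement.

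The only genuine geometric input beyond Theorem \ref{main2} is the systole condition, which plays a dual role: it makes Lemma \ref{weak inequality} applicable and it guarantees the local isometry used to control $\mu$ on small balls. The main obstacle is therefore not conceptual but purely a matter of bookkeeping --- choosing $V$, the enlarged $\lambda$, and the threshold carefully enough that the explicit constants $V_0^{\sigma+2}$, $\lambda=cV_0+D$, and $R\geq 4\lambda+L$ in the conclusion match those in the statement exactly.
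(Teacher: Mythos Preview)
Your approach is essentially identical to the paper's: establish $(P_{loc})$ via Lemma \ref{weak inequality}, define the growth function $V(R)$ using $V_0$ on small radii and $V_0F_\Gamma(R+D)$ on large radii, and invoke Theorem \ref{main2}; indeed, the paper's proof is terser than yours, simply declaring that ``the theorem follows from Theorem \ref{main2}'' without spelling out the enlargement from $\lambda_0=cV_0+1$ to $\lambda=cV_0+D$ that you correctly identify. One small slip: with $sys_\Gamma\geq 18$ the map $p$ is only guaranteed to be an \emph{isometry} on balls of radius $sys_\Gamma/4\geq 4.5$ (not $9$), though it is \emph{injective} and measure-preserving on balls of radius $<sys_\Gamma/2=9$, which is all you actually need for $\mu(B(x,R))\leq V_0$ there.
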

\begin{proof} Since the quotient space $\Gamma\backslash X$ supports a Poincar\'e inequality by  Lemma \ref{weakpoincare}, $X$ satisfies $P_{loc}$ as defined in (\ref{local Poincare inequality}). Since the covering map is locally measure preserving for any $R>0$,
\be \label{volume} \mu(B(x,R))\leq V_0F_{\Gamma}(R+D), \ \ \forall \ x\in X.
\ee
For any $R\leq \frac{sys_{\Gamma}}{4}$ and $x\in X$, $\mu(B(x,R))\leq V_0$. Define,
\Bea f(R)&=& V_0, \quad \forall \ R\leq \frac{sys_{\Gamma}}{4}\\
&=& V_0F_{\Gamma}(R+D), \quad \ \forall \ R> \frac{sys_{\Gamma}}{4}.
\Eea
Now the theorem follows from Theorem \ref{main2}.
	\end{proof}
The required lower bound on the systole and $\mu(B(x,\frac{1}{2}))$ may be achieved by scaling $\bar{d}$ suitably. 
\subsection*{Proof of Theorem \ref{CDK} :} If the quotient space $\Gamma\backslash X$ is a $CD(K,\infty)$ space then $\Gamma\backslash X$ supports a Poincar\'e inequality by Proposition \ref{Rajala}. Now the proof of Theorem \ref{CDK} follows from Theorem \ref{PCov}.
   
\begin{cor}\label{UPR} Let $(M,g)$ be a complete noncompact Riemannian manifold with dimension $n$. Let $\Gamma$ be a discrete subgroup of isometries of $(M,g)$ acting on it properly such that the diameter of the quotient space $\Gamma\backslash M$ is bounded by $D$. Suppose $Vol(B(x,\frac{1}{2}))\geq 1$ and sectional curvatures of $(M,g)$ are bounded below and above by $\kappa$ and $K$ respectively. Then there exist positive constants $C(n,\kappa, K,\sigma)$ and $\lambda(n,\kappa,K)\geq 1$ such that for any $u\in C^1(M)$ and $R\geq 4\lambda$,
$$\int_{B(x,R)} |u - u_R|^{\sigma} \ dv_g \leq C R^{\sigma-1}F(2\lambda R)\int_{B(x,2\lambda R)}| \nabla u|^{\sigma} dv_g, \quad \forall \ x\in X $$
where $dv_g$ is the volume form induced from $g.$ 
  
\end{cor}
\begin{proof} Consider a Riemannian manifold $(M,g)$ and a group $\Gamma$ which satisfy the assumptions  of Corollary \ref{UPR}.   Then for any $x\in M$, $\{B(\gamma.x,D)\}_{\gamma\in \Gamma}$ covers $M$. By continuity of Riemannian curvature, sectional curvatures of $(M,g)$ are bounded on each $\overline{B(\gamma. x,D)}$. Since $\Gamma$ acts isometrically, sectional curvatures of $(M,g)$ are bounded. Let $\kappa \leq sec \leq K.$ From Theorem 1.14 in \cite{HCB} there exists a constant $C(n,\kappa,R)>0$  such that for any $u\in C^1(M)$,
$$\int_{B(x,R)} |u-u_R|^{\sigma} dv_g \leq C(n,\kappa,R)\int_{B(x,R)} |\nabla u|^{\sigma} dv_g, \ \ \forall \ R>0, 
\ \forall \ x\in M.$$
Let $C=\sup_{0<R\leq 1}C(n,\kappa,R).$ Hence $(M,g)$ satisfies $P_{loc}$ as in (\ref{local Poincare inequality}).  Let $Vol(B(x,R))$ denote the volume of $B(x,R)$. Since $\Gamma$ acts isometrically,
$$\sup_{x\in M}Vol(B(x,R))= \sup\{Vol(B(y,R)):y\in \overline{B(x,D)}\}.$$ 
Hence it is bounded above. Similarly $\inf_{x\in M}Vol(B(x,R))$ is also bounded below by a positive constant for any $R>0.$ From Lemma 3.5 in \cite{BCS} for any $R>0,$
$$Vol(B(x,R))\leq Vol(B(x,D))F_{\Gamma}(R+D).$$ 
Let $V^K_n(R)$ denote the volume of a  ball of radius $R$ in the space of constant curvature $K$. From the Bishop-Gromov volume comparison theorem $Vol(B(x,D))\leq V_n^\kappa(D)$ and $Vol(B(x,\frac{1}{2}))\geq V_n^K(\frac{1}{2})$ for all $x\in M.$ Hence,
\Bea Vol(B(x,R))\leq V_n^{\kappa}(D) F_{\Gamma}(R+D).
\Eea
Now the required result follows from Theorem \ref{main2}.
\end{proof} 
When $M$ is simply connected, Corollary \ref{UPR} shows the dependence of the growth of the Poincar\'e constant on the growth of the fundamental group of the quotient space. If $(M,g)$ is the universal cover of a compact Riemannian manifold with non-negative Ricci curvature, then the growth of volume and the growth of the fundamental group $\Gamma$, is polynomial \cite{Mi}. Hence, $(M,g)$ satisfies a $(\sigma,\beta,\sigma)$-uniform Poincar\'e inequality as in Corollary 3.2. If $(M,g)$ is the universal cover of a compact negatively curved Riemannian manifold, then the growth of the fundamental group $\Gamma$ is exponential \cite{Mi}. 

More generally, consider a simply connected measured metric space $(X,d,\mu)$. As a consequence of Theorem \ref{PCov}, the growth of the Poincar\'e constant with respect to $R$ is polynomial (or exponential) if the growth of $\Gamma$ is polynomial (or exponential). Given a fixed measured metric space $(X,d,\mu)$,  multiple groups may act isometrically, properly on $X$,  preserving the measure $\mu$. In this case, the group with minimal growth gives a better Poincar\'e constant. It would be interesting to find out how the best Poincar\'e constant of $(X,d,\mu)$ is related to the growth of a group with minimal growth. 
\subsection*{Conflict of Interests statement: }  On behalf of all authors, the corresponding author states that there is no conflict of interest.

\end{document}